\documentclass[a4paper,twoside]{article}
\usepackage{a4}
\usepackage{amssymb}
\usepackage{amsmath}
\usepackage{upref}
\usepackage{url}
\usepackage[active]{srcltx}
\usepackage[pagebackref,colorlinks,citecolor=blue,linkcolor=blue,urlcolor=blue]{hyperref}
\allowdisplaybreaks[2] 
%
%
%
\newcount\minutes \newcount\hours
\hours=\time
\divide\hours 60
\minutes=\hours
\multiply\minutes -60
\advance\minutes \time
\newcommand{\klockan}{\the\hours:{\ifnum\minutes<10 0\fi}\the\minutes}
\newcommand{\tid}{\today\ \klockan}
\newcommand{\prtid}{\smash{\raise 10mm \hbox{\LaTeX ed \tid}}}
\renewcommand{\prtid}{}
%
%
\makeatletter
\pagestyle{headings}
\headheight 10pt
\def\sectionmark#1{} 
\def\subsectionmark#1{}
\newcommand{\sectnr}{\ifnum \c@secnumdepth >\z@
                 \thesection.\hskip 1em\relax \fi}
\def\@evenhead{\footnotesize\rm\thepage\hfil\leftmark\hfil\llap{\prtid}}
\def\@oddhead{\footnotesize\rm\rlap{\prtid}\hfil\rightmark\hfil\thepage}
\def\tableofcontents{\section*{Contents} 
 \@starttoc{toc}}
\makeatother
%
%
\makeatletter
\def\@biblabel#1{#1.}
\makeatother
%
%
%
\makeatletter
\let\Thebibliography=\thebibliography
\renewcommand{\thebibliography}[1]{\def\@mkboth##1##2{}\Thebibliography{#1}
\addcontentsline{toc}{section}{References}
\frenchspacing 
\setlength{\@topsep}{0pt}
\setlength{\itemsep}{0pt}%
\setlength{\parskip}{0pt plus 2pt}%
}
\makeatother
%
%
\makeatletter
\def\mdots@{\mathinner.\nonscript\!.%
 \ifx\next,.\else\ifx\next;.\else\ifx\next..\else
 \nonscript\!\mathinner.\fi\fi\fi}
\let\ldots\mdots@
\let\cdots\mdots@
\let\dotso\mdots@
\let\dotsb\mdots@
\let\dotsm\mdots@
\let\dotsc\mdots@
\def\vdots{\vbox{\baselineskip2.8\p@ \lineskiplimit\z@
    \kern6\p@\hbox{.}\hbox{.}\hbox{.}\kern3\p@}}
\def\ddots{\mathinner{\mkern1mu\raise8.6\p@\vbox{\kern7\p@\hbox{.}}%
    \raise5.8\p@\hbox{.}\raise3\p@\hbox{.}\mkern1mu}}
\makeatother
%
%
\makeatletter
\let\Enumerate=\enumerate
\renewcommand{\enumerate}{\Enumerate%
\setlength{\@topsep}{0pt}
\setlength{\itemsep}{0pt}%
\setlength{\parskip}{0pt plus 1pt}%
\renewcommand{\theenumi}{\textup{(\alph{enumi})}}%
\renewcommand{\labelenumi}{\theenumi}%
}
\let\endEnumerate=\endenumerate
\renewcommand{\endenumerate}{\endEnumerate\unskip}
\makeatother
%
%
\makeatletter
\def\@seccntformat#1{\csname the#1\endcsname.\quad}
\makeatother
%
%
\newcommand{\authortitle}[2]{\author{#1}\title{#2}\markboth{#1}{#2}}
%
%
\newcommand{\art}[6]{{\sc #1, \rm #2, \it #3 \bf #4 \rm (#5), \mbox{#6}.}}
\newcommand{\auth}[2]{{#1, #2.}}

\newcommand{\artprep}[3]{{\sc #1, \rm #2, #3.}}
\newcommand{\arttoappear}[3]{{\sc #1, \rm #2, to appear in \it #3}}
\newcommand{\book}[3]{{\sc #1, \it #2, \rm #3.}}
\newcommand{\AND}{{\rm and }}
%
%
\RequirePackage{amsthm}
\newtheoremstyle{descriptive}%
  {\topsep}   
  {\topsep}   
  {\rmfamily} 
  {}          
  {\bfseries} 
  {.}         
  { }         
  {}          
\newtheoremstyle{propositional}%
  {\topsep}   
  {\topsep}   
  {\itshape}  
  {}          
  {\bfseries} 
  {.}         
  { }         
  {}          
\theoremstyle{propositional}
\newtheorem{thm}{Theorem}[section]
\newtheorem{prop}[thm]{Proposition}
\newtheorem{lem}[thm]{Lemma}
\theoremstyle{descriptive}
\newtheorem{deff}[thm]{Definition}
\newtheorem{example}[thm]{Example}
%
%
%
%
%
\makeatletter
\renewenvironment{proof}[1][\proofname]{\par
  \pushQED{\qed}%
  \normalfont
  \trivlist
  \item[\hskip\labelsep
        \itshape
    #1\@addpunct{.}]\ignorespaces
}{%
  \popQED\endtrivlist\@endpefalse
}
\makeatother
%
%
\newcommand{\setm}{\setminus}
\renewcommand{\subsetneq}{\varsubsetneq}
\renewcommand{\emptyset}{\varnothing}
%
%
\newcommand{\Cp}{{C_p}}
\newcommand{\Cn}{{C_n}}
\newcommand{\Cpmu}{{C_{p,\mu}}}
\DeclareMathOperator{\Div}{div}
\DeclareMathOperator{\capp}{cap}
\newcommand{\cpmu}{\capp_{p,\mu}}
\DeclareMathOperator{\dist}{dist}
\DeclareMathOperator*{\essliminf}{ess\,lim\,inf}
\DeclareMathOperator{\interior}{int}
\newcommand{\bdy}{\partial}
\newcommand{\loc}{_{\rm loc}}
{\catcode`p =12 \catcode`t =12 \gdef\eeaa#1pt{#1}}      
\def\accentadjtext#1{\setbox0\hbox{$#1$}\kern   
                \expandafter\eeaa\the\fontdimen1\textfont1 \ht0 }
\def\accentadjscript#1{\setbox0\hbox{$#1$}\kern 
                \expandafter\eeaa\the\fontdimen1\scriptfont1 \ht0 }
\def\accentadjscriptscript#1{\setbox0\hbox{$#1$}\kern   
                \expandafter\eeaa\the\fontdimen1\scriptscriptfont1 \ht0 }
\def\accentadjtextback#1{\setbox0\hbox{$#1$}\kern       
                -\expandafter\eeaa\the\fontdimen1\textfont1 \ht0 }
\def\accentadjscriptback#1{\setbox0\hbox{$#1$}\kern     
                -\expandafter\eeaa\the\fontdimen1\scriptfont1 \ht0 }
\def\accentadjscriptscriptback#1{\setbox0\hbox{$#1$}\kern 
                -\expandafter\eeaa\the\fontdimen1\scriptscriptfont1 \ht0 }
\def\itoverline#1{{\mathsurround0pt\mathchoice
        {\rlap{$\accentadjtext{\displaystyle #1}
                \accentadjtext{\vrule height1.593pt}
                \overline{\phantom{\displaystyle #1}
                \accentadjtextback{\displaystyle #1}}$}{#1}}
        {\rlap{$\accentadjtext{\textstyle #1}
                \accentadjtext{\vrule height1.593pt}
                \overline{\phantom{\textstyle #1}
                \accentadjtextback{\textstyle #1}}$}{#1}}
        {\rlap{$\accentadjscript{\scriptstyle #1}
                \accentadjscript{\vrule height1.593pt}
                \overline{\phantom{\scriptstyle #1}
                \accentadjscriptback{\scriptstyle #1}}$}{#1}}
        {\rlap{$\accentadjscriptscript{\scriptscriptstyle #1}
                \accentadjscriptscript{\vrule height1.593pt}
                \overline{\phantom{\scriptscriptstyle #1}
                \accentadjscriptscriptback{\scriptscriptstyle #1}}$}{#1}}}}
%
%
%
%
%
%
%
\newcommand{\limplus}{{\mathchoice{\vcenter{\hbox{$\scriptstyle +$}}}
  {\vcenter{\hbox{$\scriptstyle +$}}}
  {\vcenter{\hbox{$\scriptscriptstyle +$}}}
  {\vcenter{\hbox{$\scriptscriptstyle +$}}}
}}
\newcommand{\limminus}{{\mathchoice{\vcenter{\hbox{$\scriptstyle -$}}}
  {\vcenter{\hbox{$\scriptstyle -$}}}
  {\vcenter{\hbox{$\scriptscriptstyle -$}}}
  {\vcenter{\hbox{$\scriptscriptstyle -$}}}
}}
%
%
\newcommand{\alp}{\alpha}
\newcommand{\be}{\beta}
\newcommand{\Om}{\Omega}
\renewcommand{\phi}{\varphi}
\newcommand{\eps}{\varepsilon}
\newcommand{\z}{\zeta}
\newcommand{\p}{{$p\mspace{1mu}$}}
\newcommand{\R}{\mathbf{R}}
\newcommand{\Hp}{H^{1,p}}
\newcommand{\Hploc}{H^{1,p}\loc}
\newcommand{\Hpind}[1]{{H}_{#1}}      
\newcommand{\uHpind}[1]{\itoverline{H}_{#1}}      
\newcommand{\A}{\ensuremath{\mathcal{A}}}%
\newcommand{\Omt}{{\widetilde{\Omega}}}
\newcommand{\Gt}{\widetilde{G}}
\newcommand{\Ut}{\widetilde{U}}
\newcommand{\clG}{\itoverline{G}}     
\newcommand{\ut}{\tilde{u}}
\newcommand{\vt}{\tilde{v}}
\newcommand{\wt}{\widetilde{w}}
%
%
\numberwithin{equation}{section}
\newcommand{\eqv}{\ensuremath{
         \mathchoice{\quad \Longleftrightarrow \quad}{\Leftrightarrow}
                {\Leftrightarrow}{\Leftrightarrow}}}
\newcommand{\imp}{\ensuremath{\mathchoice{\quad \Longrightarrow \quad}{\Rightarrow}
                {\Rightarrow}{\Rightarrow}}}
\newenvironment{ack}{\medskip{\it Acknowledgement.}}{}

\begin{document}

\authortitle{Anders Bj\"orn}
{Removable singularities for bounded \A-(super)harmonic functions
 on weighted $\R^n$}
\title
{Removable singularities for bounded \A-(super)harmonic 
and   quasi(super)harmonic  functions
  on weighted $\R^n$}

\author{
Anders Bj\"orn \\
\it\small Department of Mathematics, Link\"oping University, SE-581 83 Link\"oping, Sweden\\
\it \small anders.bjorn@liu.se, ORCID\/\textup{:} 0000-0002-9677-8321
}

\date{}
\maketitle

\noindent{\small {\bf Abstract}.  
It is well known that sets of \p-capacity zero
are removable for bounded \p-harmonic functions,
but on metric spaces there are 
examples of removable sets of positive capacity.
In this paper, we show that this can happen even on unweighted $\R^n$
when  $n > p$, although only in very special cases.
A complete characterization of removable
singularities for bounded \A-harmonic 
functions on weighted $\R^n$, $n \ge 1$, is also given,
where the weight is \p-admissible.
The same characterization is also shown to hold
for bounded quasiharmonic functions
on weighted $\R^n$, $n \ge 2$, as well as on unweighted $\R$.
For bounded \A-superharmonic functions and bounded quasisuperharmonic functions
on weighted $\R^n$, $n \ge 2$, 
we show that relatively closed sets are removable if and only if
they have zero capacity.
}

\medskip

\noindent
{\small \emph{Key words and phrases}:
\A-harmonic function,
\A-superharmonic function,
bounded,
\p-admissible weight,
\p-harmonic function,
\p-superharmonic function,
quasiharmonic function,
quasisuperharmonic function,
removable singularity,  
weighted Euclidean space.
}

\medskip
\noindent
{\small Mathematics Subject Classification (2020):
Primary:
31C45, 
Secondary:
31E05, 
35J92. 
}

\section{Introduction}

Sets of \p-capacity zero are removable for bounded
\A-harmonic functions, more precisely:

\begin{thm} \label{thm-1.1}
If $\Om \subset \R^n$ is open
and $E \subsetneq \Om$ is a relatively closed set with 
\p-capacity $\Cp(E)=0$, then every bounded \A-harmonic function
on $\Om \setm E$ has an \A-harmonic extension to $\Om$.
Moreover, the extension is unique and bounded.
\end{thm}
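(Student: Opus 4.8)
The plan is to recover the extension by showing first that a bounded \A-harmonic function $u$ on $\Om\setm E$ already lies in the weighted Sobolev space $\Hploc(\Om)$ across $E$, and then that it remains a weak solution of the \A-harmonic equation on all of $\Om$; interior regularity (continuity) of weak solutions then supplies the continuous \A-harmonic representative that is the sought extension. Note at the outset that $\Cp(E)=0$ forces $E$ to have zero (weighted and Lebesgue) measure, hence empty interior, so $u$ is already defined a.e.\ on $\Om$ and the open set $\Om\setm E$ is dense in $\Om$.

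The technical engine is the test-function characterization of capacity zero: since $\Cp(E)=0$, for each ball $B$ with $\overline{B}\Subset\Om$ there are cut-offs $\eta_j$ with $0\le\eta_j\le1$, equal to $1$ in a neighbourhood of $E\cap B$, whose supports shrink to $E$ (so $\eta_j\to0$ a.e.), and with $\int|\nabla\eta_j|^p\,d\mu\to0$. First I would combine these with the boundedness of $u$ to prove $u\in\Hploc(\Om)$, the weak gradient being the a.e.\ pointwise gradient inherited from $\Om\setm E$. Testing the $i$-th distributional derivative against $\psi\in C_0^\infty(\Om)$ and writing $\psi=(1-\eta_j)\psi+\eta_j\psi$, the part supported off $E$ integrates by parts legitimately in $\Om\setm E$; the correction $\int u\,\partial_i(\eta_j\psi)\,dx$ splits into $\int u\,\eta_j\,\partial_i\psi\,dx\to0$ (dominated convergence, as $u$ is bounded and $\eta_j\to0$ a.e.) and $\int u\psi\,\partial_i\eta_j\,dx$, which Hölder's inequality bounds by $\|u\|_\infty\|\psi\|_\infty\bigl(\int|\nabla\eta_j|^p\,d\mu\bigr)^{1/p}\bigl(\int_{S_j}w^{1/(1-p)}\,dx\bigr)^{(p-1)/p}$, where $S_j=\operatorname{supp}\nabla\eta_j\downarrow E$. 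The first factor tends to $0$, the second is controlled by the local integrability of the dual weight $w^{1/(1-p)}$ built into \p-admissibility, so the correction vanishes and $u\in\Hploc(\Om)$.

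With $u\in\Hploc(\Om)$ in hand, I would pass the weak equation across $E$ by the same device. For $\phi\in C_0^\infty(\Om)$ write $\phi=(1-\eta_j)\phi+\eta_j\phi$; since $(1-\eta_j)\phi$ is admissible in $\Om\setm E$, $\int\A(x,\nabla u)\cdot\nabla\bigl((1-\eta_j)\phi\bigr)\,dx=0$, so it remains to send $\int\A(x,\nabla u)\cdot\nabla(\eta_j\phi)\,dx\to0$. Splitting $\nabla(\eta_j\phi)=\eta_j\nabla\phi+\phi\nabla\eta_j$ and using the structural growth bound $|\A(x,\xi)|\le\beta\,w(x)|\xi|^{p-1}$, the first piece vanishes by dominated convergence, while the second is bounded via Hölder by $\beta\|\phi\|_\infty\bigl(\int_{S_j}|\nabla u|^p\,d\mu\bigr)^{(p-1)/p}\bigl(\int|\nabla\eta_j|^p\,d\mu\bigr)^{1/p}\to0$, the first factor being finite because $u\in\Hploc(\Om)$ (now known) and the second tending to $0$. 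Hence $\int_\Om\A(x,\nabla u)\cdot\nabla\phi\,dx=0$ for all $\phi\in C_0^\infty(\Om)$, so $u$ is a weak solution on $\Om$; its continuous representative $\ut$ is \A-harmonic on $\Om$ and, since $\ut=u$ a.e., satisfies $\|\ut\|_\infty\le\|u\|_{L^\infty(\Om\setm E)}$, so the extension is bounded.

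Uniqueness is then immediate: two \A-harmonic extensions agree a.e.\ on $\Om\setm E$ and, being continuous on the dense open set $\Om\setm E$, coincide throughout $\Om$. I expect the real obstacle to be the two passages across $E$ (Sobolev membership and the weak identity) carried out using only the smallness of $\Cp(E)$ rather than of the measure of $E$; the delicate point is the gradient term $\int\phi\,\A(x,\nabla u)\cdot\nabla\eta_j\,dx$, whose control rests entirely on the capacitary estimate $\int|\nabla\eta_j|^p\,d\mu\to0$ together with the absolute continuity of $\int|\nabla u|^p\,d\mu$ over the shrinking supports $S_j$.
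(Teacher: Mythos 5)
The paper does not actually prove Theorem~\ref{thm-1.1}; it quotes it as known, citing Serrin~\cite{serrin64} and \cite[Theorem~7.36]{HeKiMa} (restated as Theorem~\ref{thm-removability-qharm}). Your strategy---capacity cut-offs $\eta_j$, Sobolev membership of $u$ across $E$, passing the weak equation across $E$, then interior regularity---is indeed the classical route behind those references, but as written it has a genuine gap at the step on which everything hinges: the local energy bound. Your Step~2 H\"older estimate needs $\int_{\operatorname{supp}\phi}|\nabla u|^p\,d\mu<\infty$ \emph{across} $E$, and you justify this by ``$u\in\Hploc(\Om)$ (now known)''; but Step~1 never proves that $|\nabla u|\in L^p\loc(\Om,\mu)$---it only attempts to identify the distributional derivative of $u$ with the pointwise gradient inherited from $\Om\setm E$. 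Membership in the weighted Sobolev space requires precisely this $p$-integrability of the gradient near $E$, and it is exactly here that the boundedness of $u$ must be used, via a Caccioppoli inequality in $\Om\setm E$ tested with $u\eta^p(1-\eta_j)^p$ (where $\eta$ is a fixed cut-off for a ball $B\Subset\Om$):
\[
\int_{\Om\setm E}|\nabla u|^p\eta^p(1-\eta_j)^p\,d\mu
\le C\|u\|_\infty^p\int_{\Om\setm E}|\nabla(\eta(1-\eta_j))|^p\,d\mu
\le C'\|u\|_\infty^p\Bigl(\int|\nabla\eta|^p\,d\mu+\int|\nabla\eta_j|^p\,d\mu\Bigr),
\]
which is bounded uniformly in $j$, whence Fatou's lemma (using $\mu(E)=0$) gives $\int_B|\nabla u|^p\,d\mu<\infty$. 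Without this step, both the claim $u\in\Hploc(\Om)$ and the key estimate in your Step~2 are unsupported.

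There is a second, independent flaw: your bound for $\int u\psi\,\partial_i\eta_j\,dx$ requires $w^{1/(1-p)}\in L^1\loc$, which you assert is ``built into \p-admissibility''. It is not. For $n\ge2$, \p-admissible weights need not be Muckenhoupt $A_p$ weights (the equivalence via Bj\"orn--Buckley--Keith~\cite{BjBuKe} is a one-dimensional phenomenon, as Section~\ref{sect-R} of the paper emphasizes): for example, $w(x)=|x|^\alp$ is \p-admissible for every $\alp>-n$ \cite[Chapter~15]{HeKiMa}, yet for $\alp\ge n(p-1)$ the dual weight $|x|^{-\alp/(p-1)}$ is not locally integrable. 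The same defect undermines your limit $\int\partial_i u\,(1-\eta_j)\psi\,dx\to\int\partial_i u\,\psi\,dx$, which needs $\nabla u\in L^1\loc(dx)$. The repair is to abandon the Lebesgue-distributional formulation altogether: with the Caccioppoli bound in hand, show that the functions $u\eta(1-\eta_j)\in\Hp_0(\Om\setm E,\mu)\subset\Hp(\Om,\mu)$ form a Cauchy sequence in $\Hp(\Om,\mu)$ (the gradient differences are controlled by $\|u\|_\infty\,\|\nabla\eta_j\|_{L^p(\mu)}\to0$ together with dominated convergence, since $\eta_j\to0$ a.e.), so that $u\eta\in\Hp(\Om,\mu)$ by completeness and hence $u\in\Hploc(\Om,\mu)$. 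Your Step~2---whose H\"older estimate is correctly taken with respect to $\mu$ and never sees the dual weight---then goes through, as do your regularity, boundedness and uniqueness conclusions.
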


Here,  and throughout the paper, $1<p<\infty$
and
an \emph{\A-harmonic function} is a continuous
weak solution of the \A-harmonic equation
\[
       \Div \A(x,\nabla u)=0,
\]
where
$\A$ satisfies the degenerate ellipticity
conditions (3.3)--(3.7)
in \cite[p.~56]{HeKiMa} (with the parameter $p$).
The \emph{\p-harmonic functions}, which are the continuous
weak solutions of the \p-Laplace equation
$     \Div(|\nabla u|^{p-2}\nabla u)=0$,
are included as a special case.

Theorem~\ref{thm-1.1} 
is well known and goes back to Serrin~\cite[Theorem~10]{serrin64},
who covered the case when $E$ is compact.
It has later been extended to
quasiharmonic functions on unweighted $\R^n$ (with $E$ compact)
in Tolksdorff~\cite[Theorem~1.2]{tolksdorf},
\A-harmonic functions on
weighted $\R^n$ in Heinonen--Kilpel\"ainen--Martio~\cite[Theorem~7.36]{HeKiMa},
and \p-harmonic and quasiharmonic functions on metric spaces in
Bj\"orn--Bj\"orn--Shanmugalingam~\cite[Proposition~8.2]{BBS2}
and Bj\"orn~\cite[Theorem~6.2]{ABremove},
covering also relatively open $E$.
(A \emph{quasiharmonic function} is a continuous quasiminimizer.)

A natural question is if the converse is true:
\emph{If $E$ as above is removable for bounded \p-harmonic or bounded \A-harmonic
functions in $\Om \setm E$, does it follow that $\Cp(E)=0$?}
(To avoid pathological cases, we
assume that no component of $\Om$ is contained in $E$.)

Maz$'$ya~\cite[Remark~1.4]{mazya72} showed this converse
for compact $E$ when $p<n$.
For compact $E$ and bounded $\Om$,
this was extended to
unweighted and weighted $\R^n$, $1<p<\infty$, by
Heinonen--Kilpel\"ainen~\cite[Remark~4.8]{HeKi88}
and Heinonen--Kilpel\"ainen--Martio~\cite[comment after Theorem~7.36]{HeKiMa},
respectively.
For \p-harmonic functions on metric spaces $X$ it
is due to Bj\"orn--Bj\"orn--Shanmugalingam~\cite[Proposition~8.4]{BBS2},
when $E$ is compact, $\Om$ is bounded
and the capacity $\Cp(X \setm \Om)>0$.

On the other hand, in \cite[Example~9.3]{ABremove}, the present author
gave examples,
for the metric space $[0,1]$, 
of compact sets with positive capacity (and even with positive measure)
removable for
bounded \p-harmonic and quasiharmonic functions.
In particular this shows that the condition $\Cp(X \setm \Om)>0$
above is essential.
Now we are able to show that this can happen even on unweighted
$\R^n$, $n \ge 2$, but only in very special cases,
namely when all the involved functions are constant.

Assume from now on that $\Om \subset \R^n$ is nonempty and open,
and that $E \subsetneq \Om$ is a relatively closed subset
such that no component of $\Om$ is contained in $E$.
(The last condition is to avoid noninteresting pathological situations.)

For unweighted $\R^n$, $n \ge 2$,
we have the following complete characterization
of removable sets for bounded \A-harmonic functions.

\begin{thm} \label{thm-main-intro-unweighted}
Assume that $\Om \subset \R^n$ \textup{(}unweighted\/\textup{)} with $n \ge 2$.
Then $E$ is removable for bounded \A-harmonic functions in $\Om \setm E$
if and only if one of the following two disjoint cases is true\/\textup{:}
\begin{enumerate}
\item \label{a-a}
$\Cp(E)=0$\textup{;}
\item \label{a-b}
$p > n$, $\Om=\R^n$ and $E$ is a singleton set.
\end{enumerate}
\end{thm}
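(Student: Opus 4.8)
The plan is to treat the two implications separately, reducing sufficiency to Theorem~\ref{thm-1.1} together with a Liouville theorem, and necessity to the construction of bounded nonconstant \A-harmonic functions with poles. For sufficiency, case~\ref{a-a} is exactly Theorem~\ref{thm-1.1} (and when $p>n$ it only concerns $E=\emptyset$, since then $\Cp(\{x\})>0$ for every point $x$, so \ref{a-a} and \ref{a-b} are genuinely disjoint). For case~\ref{a-b} I would prove the Liouville-type statement that every bounded \A-harmonic function $u$ on $\R^n\setm\{x_0\}$ with $p>n$ is constant; a constant is \A-harmonic, so $u$ then has an \A-harmonic extension to $\R^n$, which is unique and bounded by the comparison principle.

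The heart of sufficiency is thus this Liouville theorem, and the mechanism behind it is that a single point carries \emph{no global condenser capacity}: one computes $\capp_p(\{x_0\},B(x_0,R))\simeq R^{n-p}\to0$ as $R\to\infty$ when $p>n$. First I would record the local structure at the isolated singularity: since $p>n$, a bounded \A-harmonic function near $x_0$ either extends \A-harmonically across $x_0$ or carries a nonzero flux there and then behaves like the (bounded) fundamental solution $\sim|x-x_0|^{(p-n)/(p-1)}$. I would then use the radial \A-capacitary potentials of the annular condensers $B(x_0,R)\setm\overline{B(x_0,\rho)}$ as barriers; since their capacity vanishes as $\rho\to0$ and $R\to\infty$, a nonzero flux at $x_0$ would force $u$ to grow like the fundamental solution at infinity, contradicting boundedness. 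Hence the flux vanishes, the singularity is removable, $u$ extends to a bounded \A-harmonic function on $\R^n$, and the classical Liouville theorem (bounded \A-harmonic functions on $\R^n$ are constant) finishes the argument.

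For necessity I would argue contrapositively: assuming $\Cp(E)>0$ (so \ref{a-a} fails) and that \ref{a-b} also fails, I construct a bounded nonconstant \A-harmonic function on $\Om\setm E$ with no \A-harmonic extension to $\Om$, contradicting removability. The engine is the \A-capacitary potential of a condenser, which is nonconstant precisely when the condenser has positive capacity and which cannot be extended across the set carrying its Riesz measure. Three regimes arise. If $p\le n$, then $\Cp(E)>0$ yields a compact $K\subset E$ of positive variational capacity inside a ball $B\Subset\Om$, and its \A-capacitary potential is a bounded nonconstant \A-harmonic function on $B\setm K\supseteq B\setm E$ whose singular set $K\subset E$ obstructs extension. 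If $p>n$ but $\Om\ne\R^n$, then fixing $x_0\in E$ and a small ball $\overline{B(x_0,r)}\subset\Om$, the bounded fundamental-solution-type potential on $B(x_0,r)\setm\{x_0\}$ has a pole at $x_0$; extending it as a bounded \A-harmonic function on $\Om\setm E$ (possible because the far boundary $\bdy\Om$ keeps it bounded, which is exactly what fails on $\R^n$) gives a non-extendable witness. Finally, if $p>n$ and $\Om=\R^n$ but $E$ is not a singleton, pick $a\ne b$ in $E$ and use that the two-point condenser has \emph{positive} capacity, $\capp_p(\{a\},\{b\})>0$ for $p>n$; its \A-capacitary potential is a bounded nonconstant \A-harmonic function on $\R^n\setm\{a,b\}\supseteq\R^n\setm E$ with poles at $a$ and $b$, which restricts to a non-extendable witness on $\R^n\setm E$.

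The main obstacle is the borderline regime $p>n$, where points have positive Sobolev capacity and the naive expectation ``positive capacity $\Rightarrow$ non-removable'' fails. Getting the dichotomy right hinges on the contrast between the vanishing \emph{global} condenser capacity of a single point against infinity (which forces the Liouville theorem, hence removability of a singleton on all of $\R^n$) and the \emph{positive} condenser capacity of two points against each other (which forces non-removability as soon as $E$ has at least two points). Making both facts rigorous for a general operator \A---rather than for the explicit radial $p$-harmonic fundamental solutions---is the technical crux: it requires the existence, boundedness and nonconstancy of \A-capacitary potentials and a careful comparison-principle argument on annuli, together with the fact that a nonzero flux at an isolated singularity propagates to unbounded growth at infinity on $\R^n$.
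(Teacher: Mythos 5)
Your sufficiency outline is essentially right, but the necessity half has a genuine gap, and it is precisely the subtlety this paper is organized around. Non-removability of $E$ demands a bounded \A-harmonic function defined on \emph{all} of $\Om\setm E$ admitting no bounded \A-harmonic extension to $\Om$, whereas your witnesses are local. In the regime $p\le n$, the capacitary potential of the condenser $(K,B)$, $K\subset E$, $B\Subset\Om$, lives only on $B\setm K$; it is not \A-harmonic across $\bdy B$ (extension by zero is not \A-harmonic), so you have produced no function on $\Om\setm E$ at all. The inference hiding here --- ``some compact $K\subset E$ of positive capacity is nonremovable, hence $E$ is nonremovable'' --- is false in general: on $\R$ there exist removable relatively closed sets containing nonremovable compact subsets (Theorem~\ref{thm-R}, \cite[Example~9.1]{ABremove}), and the introduction stresses that excluding this phenomenon for $n\ge 2$ is an essential part of the proof. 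The paper does this by building the witness globally on $G=\Om\setm E$: a Perron solution $\Hpind{G}f$ whose boundary data is pinned at two regular points $x_1,x_2\in\bdy G$ supplied by the Kellogg property, so that any extension to $\Om$ violates the strong maximum principle (Cases~1 and~2 in the proof of Theorem~\ref{thm-main-Rn}, from which the unweighted theorem is then read off). Your regime $p>n$, $\Om\ne\R^n$ has the same defect in stronger form: ``extending the fundamental-solution-type potential to a bounded \A-harmonic function on $\Om\setm E$'' is not a construction (\A-harmonic functions cannot be extended to prescribed larger domains), and the claim that $\bdy\Om$ keeps it bounded is false whenever $\Om$ is unbounded, since $|x-x_0|^{(p-n)/(p-1)}$ blows up at infinity for $p>n$. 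Note also that the natural global repair of your $p\le n$ regime --- the equilibrium potential of $(K,\Om)$ --- degenerates to a constant when $p=n$ and $\Om=\R^n$, by $n$-parabolicity, so that borderline case needs a condenser between two disjoint compact subsets of $E$, or the paper's two-regular-point construction.

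Two further points. You never treat the case $\interior E\ne\emptyset$, yet all of your restriction arguments implicitly require $\Om\setm E$ to be dense: in the regime $p>n$, $\Om=\R^n$, you need the two-point potential on $\R^n\setm\{a,b\}$ to remain nonconstant after restriction to $\R^n\setm E$, and you need any bounded \A-harmonic extension of that restriction to inherit, by continuity on a dense set, the values of the potential near $a$ and $b$; both collapse if $E$ has interior. The paper devotes a separate, nontrivial argument to this case (Lemma~\ref{lem-intE-nonempty}). Once $\interior E=\emptyset$ is secured, your two-point condenser argument (positivity of $\capp_p(\{a\},\R^n\setm\{b\})$ via Morrey's inequality, then the Liouville theorem) is correct and is a genuinely different, attractive alternative to the paper's Case~2. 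Finally, the Liouville theorem on $\R^n\setm\{x_0\}$ for $p>n$, on which your sufficiency rests, is obtained in your plan through flux invariance and isolated-singularity asymptotics, which for a general operator \A\ requires the theory of singular solutions and careful comparison on annuli; the paper's Proposition~\ref{prop-char-reg} reaches the same conclusion (and for quasiharmonic functions too) by a softer route: limits at $x_0$ and $\infty$ exist by Harnack chains, and the quasiminimizer energy inequality tested against condenser capacitary potentials, whose capacities tend to zero by \p-parabolicity, forces the energy of $u$ to vanish.
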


Note that when $p>n$ all points have positive $\Cp$-capacity,
and thus \ref{a-b} gives examples of removable sets with positive capacity,
although 
only 
when all bounded \A-harmonic functions on $\Om \setm E$
are constant.

On weighted  $\R^n$ 
equipped with a \p-admissible weight $w$,
we are also able to give a complete characterization,
albeit 
a bit more involved.

\begin{thm} \label{thm-main-Rn}
Assume that $\Om \subset \R^n$ with $n \ge 2$,
where $\R^n$ is 
equipped with $d\mu=w\,dx$ and $w$ is a \p-admissible weight.
Then the following are equivalent\/\textup{:}
\begin{enumerate}
\item \label{r-pharm}
$E$ is removable for bounded \A-harmonic functions in $\Om \setm E$\textup{;}
\item \label{r-qharm}
$E$ is removable for bounded quasiharmonic functions in $\Om \setm E$\textup{;}
\item \label{r-b}
  either $\Cpmu(E)=0$, or
  there is some $x_0 \in E$ such that 
$\Cpmu(E \setm \{x_0\})=\Cpmu(\R^n \setm \Om)=0$
and $(\R^n,\mu)$ is \p-parabolic.
\end{enumerate}

Moreover, in \ref{r-qharm} the quasiharmonicity constant $Q$ is
preserved.
\end{thm}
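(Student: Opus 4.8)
The plan is to establish the cycle \ref{r-b}\,$\Rightarrow$\,\ref{r-qharm}\,$\Rightarrow$\,\ref{r-pharm}\,$\Rightarrow$\,\ref{r-b}. Throughout I use that every bounded \A-harmonic function is a bounded $Q_0$-quasiharmonic function, where $Q_0$ depends only on the ellipticity ratio $\be/\alp$ in \cite[(3.3)--(3.7)]{HeKiMa}; this containment of function classes is what drives the step \ref{r-qharm}\,$\Rightarrow$\,\ref{r-pharm}. The quantitative claim about $Q$ will be recorded in the construction used for \ref{r-b}\,$\Rightarrow$\,\ref{r-qharm}.

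For \ref{r-b}\,$\Rightarrow$\,\ref{r-qharm} I would treat the two alternatives in \ref{r-b} separately. If $\Cpmu(E)=0$, then $\mu(E)=0$ and the desired removability, with the constant $Q$ unchanged, is the quasiharmonic analogue of Theorem~\ref{thm-1.1} from \cite[Theorem~6.2]{ABremove}: the extension is produced by solving obstacle problems on balls, and since sets of zero capacity are negligible for $\Hploc$ this neither introduces new competitors nor alters the quasiminimizing constant. In the parabolic alternative I would first remove the zero-capacity sets $E\setm\{x_0\}$ and $\R^n\setm\Om$ by the same result, turning a bounded quasiharmonic function on $\Om\setm E$ into a bounded quasiharmonic function on $\R^n\setm\{x_0\}$. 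Since $(\R^n,\mu)$ is \p-parabolic, a Liouville theorem for bounded quasiminimizers then forces this function to be constant, so it extends trivially and $Q$ is again preserved.

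For \ref{r-qharm}\,$\Rightarrow$\,\ref{r-pharm} I would take a bounded \A-harmonic $u$ on $\Om\setm E$, regard it as bounded $Q_0$-quasiharmonic, and extend it by \ref{r-qharm} to a $Q_0$-quasiharmonic $\ut$ on $\Om$. Then $\ut\in\Hploc(\Om)$ is continuous, bounded and \A-harmonic on $\Om\setm E$, and the remaining point is that a genuine quasiminimizer carries no concentrated flux across the small set $E$; hence the weak \A-harmonic equation, already valid for test functions supported in $\Om\setm E$, passes to all test functions and $\ut$ is \A-harmonic on $\Om$. The radial function $x\mapsto|x|^{(p-n)/(p-1)}$ for $p>n$ shows why $\Hploc$-membership alone is insufficient: it lies in $\Hploc$ and is \A-harmonic off the origin, yet is not \A-harmonic there, and correspondingly fails to be a quasiminimizer across the origin.

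The substance of the theorem is the implication \ref{r-pharm}\,$\Rightarrow$\,\ref{r-b}, which I would prove by contraposition and which I expect to be the main obstacle. If \ref{r-b} fails, then $\Cpmu(E)>0$ and at least one of the following holds: $(\R^n,\mu)$ is \p-hyperbolic; $\Cpmu(\R^n\setm\Om)>0$; or $\Cpmu(E\setm\{x_0\})>0$ for every $x_0\in E$. In each case I would construct a bounded nonconstant \A-harmonic function on $\Om\setm E$ admitting no \A-harmonic extension. In the \p-hyperbolic case the bounded equilibrium potential of a compact $K\subset E$ with $\Cpmu(K)>0$ serves: it equals $1$ on $K$, tends to $0$ at infinity, and carries a nontrivial equilibrium charge on $K$, which is incompatible with \A-harmonicity across $E$ by the divergence theorem. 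When $\Cpmu(\R^n\setm\Om)>0$ I would instead solve a Dirichlet problem with data $1$ on a positive-capacity piece of $\R^n\setm\Om$ and $0$ on a compact piece of $E$, using the second ``plate'' to force nonconstancy even on a \p-parabolic space. When $E$ carries positive capacity away from each of its points, it contains two disjoint compacta $K_1,K_2$ of positive capacity, and the solution with data $1$ on $K_1$ and $0$ on $K_2$ is the required capacitor-type obstruction. The difficulty throughout is to guarantee simultaneously boundedness, strict separation of the prescribed values (so that no \A-harmonic extension can exist), and the correct behavior at infinity dictated by the \p-hyperbolic/\p-parabolic dichotomy; here I would rely on the Harnack inequality, the comparison and strong maximum principles, and the capacity estimates available for \p-admissible weights.
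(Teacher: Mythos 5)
Your cycle hinges on the implication \ref{r-qharm}\,$\Rightarrow$\,\ref{r-pharm}, and the principle you use there --- that a quasiminimizer ``carries no concentrated flux'', so that a quasiharmonic function on $\Om$ which is \A-harmonic on $\Om \setm E$ must be \A-harmonic on all of $\Om$ --- is false. Take $p=2$, $\A(x,\xi)=\xi$, $\Om=B(0,1)$, $E=\{x\in\Om : x_1=0\}$ and $u(x)=\max\{x_1,2x_1\}$. Then $u$ is bounded and harmonic in $\Om\setm E$, and it is quasiharmonic in $\Om$: it minimizes $\int F(\nabla v)\,dx$ for the convex functional $F(\xi)=g(\xi_1)+\xi_2^2+\cdots+\xi_n^2$, where $g$ is even, $C^1$, convex, comparable to $t^2$, and affine on $[1,2]$, so that $g'(1)=g'(2)$ and the flux $F'(\nabla u)$ is constant across $\{x_1=0\}$, making $u$ a weak solution and hence (by convexity) a minimizer; by the Giaquinta--Giusti observation, minimizers of such functionals are $Q$-quasiminimizers of the Dirichlet energy with $Q$ the comparability ratio. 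Yet $u$ is not harmonic in $\Om$: its distributional Laplacian is a nonzero surface measure on $E$. So a quasiharmonic function can very well carry concentrated flux on a set of positive capacity, and your fallback --- that $E$ is ``small'' --- is unavailable at this point of the cycle, since smallness (\ref{r-b}) is only derived later from \ref{r-pharm}; using it here would be circular. The paper never proves \ref{r-qharm}\,$\Rightarrow$\,\ref{r-pharm} directly: it shows that \ref{r-b} implies both \ref{r-pharm} and \ref{r-qharm}, and that if \ref{r-b} fails there is a single bounded \A-harmonic function on $\Om\setm E$ admitting no quasiharmonic extension whatsoever, which refutes \ref{r-pharm} and \ref{r-qharm} simultaneously.

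There are two further places where you assert precisely what requires proof. First, the ``Liouville theorem for bounded quasiminimizers'' you invoke on $\R^n\setm\{x_0\}$ is not a citable result: in the relevant case $\Cpmu(\{x_0\})>0$, the puncture cannot be removed, and boundedness together with \p-parabolicity must be exploited directly. This is exactly Proposition~\ref{prop-char-reg}, whose proof needs Lemmas~\ref{lem-x->infty} and~\ref{lem-x->x0} (existence of limits at $\infty$ and at $x_0$, via Harnack chains on spheres) followed by an energy-versus-capacity comparison using the quasiminimizing property; this is the technical core of the sufficiency direction and cannot be waved through. Second, in your contrapositive step the equilibrium-potential/divergence-theorem obstruction is the classical argument for \emph{compact} $E$; for relatively closed $E$, possibly of positive measure or with nonempty interior, you cannot identify an arbitrary bounded \A-harmonic or quasiharmonic extension $U$ with the potential $u$: the equality $U=u$ off $E$ only gives $\nabla U=\nabla u$ a.e.\ off $E$, and the flux computation collapses on $E$ itself. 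This compact-versus-relatively-closed issue is exactly the subtlety the paper emphasizes (on $\R$ there exist removable closed sets with nonremovable compact subsets), and it is why its proof proceeds instead via the Kellogg property, Perron solutions attached to two regular boundary points, and the strong maximum principle (with Lemma~\ref{lem-intE-nonempty} disposing of the nonempty-interior case, and Proposition~\ref{prop-char-reg} plus the Liouville theorem handling the \p-hyperbolic case).
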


In both theorems above, $\A$ is assumed to be fixed,
but by the characterizations removability for one $\A$ holds if and only if it
holds for all other \A, as long as $p$ and $\mu$ are fixed.
Since \ref{r-pharm} $\eqv$ \ref{r-qharm} in Theorem~\ref{thm-main-Rn},
it follows that the conditions in
Theorem~\ref{thm-main-intro-unweighted}
also characterize removability for bounded quasiharmonic functions
on unweighted $\R^n$.

Naively it may seem that once nonremovability has been shown
for compact sets with positive capacity
(disregarding for the moment the exceptional cases above)
it would follow for relatively closed sets $E$ with positive capacity,
as $E$ contains a compact set with positive capacity.
However, this is a little more subtle, and indeed on $\R$ there are
examples of relatively closed removable sets with compact nonremovable
subsets (for bounded \p-harmonic functions),
see Theorem~\ref{thm-R} below or \cite[Example~9.1]{ABremove}.
Ruling out such possibilities on $\R^n$, $n \ge 2$,
is therefore an important
part of the proofs of Theorems~\ref{thm-main-intro-unweighted} and~\ref{thm-main-Rn}.
This may also be the reason why 
nonremovability results for noncompact sets first seems to appear in 
Bj\"orn--Bj\"orn--Shanmugalingam~\cite[Propositions~8.4 and~8.5]{BBS2}.

Compared with the situation in (weighted) $\R^n$, $n \ge 2$, as depicted
above, the situation can be quite different in metric spaces.
Even on the unweighted real line $\R$ the situation is fundamentally
different than in higher-dimensional Euclidean spaces, see
\cite[Example~9.1]{ABremove}.
To get a feeling for this, we include this case in our
studies here.
We are now able to give the following complete
characterization of removable singularities for
bounded \A-harmonic functions on weighted $\R$.

\begin{thm} \label{thm-R}
  The set $E$ is removable for bounded \A-harmonic  functions
  in $\Om \setm E$, with respect to $(\R,w)$,
where $w$ is a \p-admissible weight,
  if and only if
  there is a constant $C$ such that
  for every component $I$ of $\Om$ it is
  true that $I \setm E$ is connected and 
\begin{equation} \label{eq-I}
  |I| \le C |I \setm E|,
\end{equation}
where $|\cdot|$ denotes the Lebesgue measure.
Moreover, in this case the extensions are unique.
\end{thm}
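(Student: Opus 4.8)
The plan is to exploit the one–dimensional structure of the equation. On an interval the ellipticity assumptions force $\A(x,\xi)=a(x)\,|\xi|^{p-2}\xi$ with $a\approx w$, so the \A-harmonic equation reads $a(x)|u'|^{p-2}u'\equiv\text{const}$; hence every \A-harmonic function on an interval $J$ has the form $u=\alpha+\beta\Phi|_J$, where $\Phi'=a^{-1/(p-1)}$. Since $a^{-1/(p-1)}\approx\sigma:=w^{-1/(p-1)}$, only $\sigma$ (hence only $p$ and $w$) enters, which is what makes the characterisation independent of $\A$. A preliminary point I would settle first is that \p-admissibility gives $\sigma\in L^1\loc(\R)$ (equivalently, every point has positive capacity): if $\sigma$ were non-integrable on one side of a point, one could push a unit oscillation across that point with energy tending to $0$, contradicting the $(1,p)$-Poincar\'e inequality. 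Thus $\Phi$ is finite, continuous and strictly increasing, and writing $s(J):=\int_J a^{-1/(p-1)}\approx\int_J\sigma$, the function $u=\alpha+\beta\Phi$ is bounded on $J$ exactly when $\beta=0$ or $s(J)<\infty$.

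\textbf{Componentwise analysis, connectedness and uniqueness.}
I would then localise to a component $I=(a,b)$ of $\Om$. If $I\setm E$ were disconnected, I would pick two of its intervals and let $u$ be distinct constants there (and $0$ on the other components of $\Om\setm E$); this is a bounded \A-harmonic function admitting no \A-harmonic extension to $I$, because any \A-harmonic function on the connected interval $I$ equals $\alpha+\beta\Phi$ and so is either strictly monotone or constant and cannot take two distinct constant values. This forces $I\setm E=(c,d)$ to be an interval. Granting connectedness, any \A-harmonic extension must equal $\alpha+\beta\Phi$ on all of $I$ and agree with $u$ on $(c,d)$; since $\Phi$ is nonconstant on the nonempty interval $(c,d)$, the constants $\alpha,\beta$ are determined by $u|_{(c,d)}$. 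This both fixes a unique candidate extension and yields the uniqueness clause.

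\textbf{The governing uniform oscillation bound.}
The heart of the matter is that removability is equivalent to a \emph{uniform} bound. For bounded $u$ with $\|u\|_\infty=:M$ one has $\operatorname{osc}_{(c,d)}u=|\beta_I|\,s(I\setm E)\le2M$ on each component, while the forced extension satisfies $\operatorname{osc}_I v=|\beta_I|\,s(I)=\operatorname{osc}_{(c,d)}u\cdot s(I)/s(I\setm E)$. Hence the glued extension is bounded for \emph{every} bounded $u$ precisely when $L:=\sup_I s(I)/s(I\setm E)<\infty$. For sufficiency, $L<\infty$ gives $\|v\|_\infty\le(1+2L)M$ and $v$ is \A-harmonic on each component (being $\alpha+\beta\Phi$ on the whole of $I$), hence a bounded \A-harmonic extension exists. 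For necessity, if $L=\infty$ I would choose components $I_k$ with $s(I_k)/s(I_k\setm E)\ge k$, set $u=(\Phi-\Phi(c_k))/(\Phi(d_k)-\Phi(c_k))\in[0,1]$ on $I_k\setm E$ and $u\equiv0$ elsewhere; then $\|u\|_\infty=1$, but the (unique) forced extension has $\operatorname{osc}_{I_k}v\ge k$, so it is unbounded and no bounded extension exists. This is exactly the nonlocal phenomenon flagged in the introduction: each $I_k$ is individually removable, yet their union is not.

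\textbf{From the $s$-ratio to the Lebesgue condition (main obstacle).}
It remains to turn $L<\infty$ into \eqref{eq-I}, and this is the step I expect to be hardest. Since the missing pieces $(a,c)$ and $(d,b)$ abut the endpoints of the component, \eqref{eq-I} says precisely that their lengths are at most a fixed multiple of $|I\setm E|$. I would match this with $L<\infty$ using that the dual measure $d\nu:=\sigma\,dx$ is \emph{doubling} on $\R$; this is where \p-admissibility is genuinely used (e.g.\ because such $w$ is an $A_p$-weight, so $\sigma$ is an $A_{p'}$-weight and $\nu$ is doubling). Doubling bounds $\nu$ of an adjacent interval of comparable length by $\nu(I\setm E)$, giving $s(I)\le C'\,s(I\setm E)$ from \eqref{eq-I}; the reverse doubling that holds automatically for doubling measures on $\R$ supplies the converse and, applied to unbounded components, shows that $|I\setm E|=\infty$ forces $s(I\setm E)=\infty$, so that only constants occur there. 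The delicate part is \emph{uniformity}: $w$ may degenerate on a fat set, so infinitely many components can sit next to singularities of $w$, and one must see that the single doubling constant of $\nu$ controls them all simultaneously. Carrying this out converts $L<\infty$ into $\sup_I|I|/|I\setm E|<\infty$ and completes the equivalence, with uniqueness already established above.
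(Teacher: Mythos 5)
Your proposal is correct and follows essentially the same route as the paper's own proof: the explicit one-dimensional representation $u=\alpha+\beta\Phi$ with $\Phi'\approx w^{1/(1-p)}$, connectedness and uniqueness via the forced componentwise extension, reduction of removability to the uniform ratio condition for the dual measure $d\nu=w^{1/(1-p)}\,dx$ (your $L<\infty$ is the paper's \eqref{eq-nu}, with your single necessity construction subsuming the paper's Cases~1 and~2), and finally the transfer between \eqref{eq-nu} and \eqref{eq-I} using that \p-admissibility makes $w$ an $A_p$-weight and hence $\nu$ an $A_{p'}$-measure. The only real variation is in that last step, where you invoke doubling and reverse doubling of $\nu$ (which does work, since $I\setm E$ is an interval and the doubling constant is global), whereas the paper cites the quantitative $A_{p'}$/$A_\infty$ comparison estimates of Coifman--Fefferman~\cite{coifmanF} and \cite[Lemmas~15.5 and~15.8]{HeKiMa} --- interchangeable pieces of Muckenhoupt theory.
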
  

Note that this time the removability condition is 
not only independent of $\A$, but also of $w$ and $p$.
In Theorem~\ref{thm-R-qharm} we show that removable
for bounded quasiharmonic functions on \emph{unweighted} $\R$
is characterized by the same condition.

A related topic is 
removability for bounded  \A-superharmonic
and bounded quasisuperharmonic functions.
Also in this case,
it has been shown that sets of zero capacity are removable, 
see Tolksdorff~\cite[Theorem~1.5]{tolksdorf},
Heinonen--Kil\-pe\-l\"ai\-nen~\cite[Theorem~4.7]{HeKi88},
Heinonen--Kilpel\"ainen--Martio~\cite[Theorem~7.35]{HeKiMa},
Bj\"orn--Bj\"orn--Shan\-mu\-ga\-lin\-gam~\cite[Proposition~8.3]{BBS2}
and Bj\"orn~\cite[Theorem~6.3]{ABremove}.

For compact $E$ with positive capacity and bounded $\Om$
several of the nonremovability results mentioned above also
apply to \A-superharmonic and quasisuperharmonic functions.
More specifically, nonremovability  was obtained for 
\A-superharmonic functions on unweighted and weighed $\R^n$ by
Heinonen--Kil\-pe\-l\"ai\-nen~\cite[Remark~4.8]{HeKi88}
and Heinonen--Kilpel\"ainen--Martio~\cite[comment after Theorem~7.36]{HeKiMa},
respectively,
while nonremovability for \p-superharmonic functions and quasisuperharmonic
functions on metric spaces $X$
(under the additional condition that $\Cp(X \setm \Om)>0$)
is due to Bj\"orn--Bj\"orn--Shanmugalingam~\cite[Proposition~8.4]{BBS2}
and Bj\"orn~\cite[Proposition~7.2]{ABremove}, respectively.

We can now show that on weighted $\R^n$, $n \ge2$,
relatively closed sets are removable for bounded \A-superharmonic
and bounded quasisuperharmonic functions
if and only if they have capacity zero, with no exceptional cases.
More precisely we obtain the following result.

\begin{thm} \label{thm-superh}
Assume that $\Om \subset \R^n$ with $n \ge 2$,
where $\R^n$ is 
equipped with $d\mu=w\,dx$ and $w$ is a \p-admissible weight.
Then the following are equivalent\/\textup{:}
\begin{enumerate}
\item \label{s-pharm}
$E$ is removable for bounded \A-superharmonic functions in $\Om \setm E$\textup{;}
\item \label{s-qharm}
$E$ is removable for bounded quasisuperharmonic functions in $\Om \setm E$\textup{;}
\item \label{s-pharm-2}
  $E$ is removable for \A-superharmonic functions in $\Om \setm E$
  which are bounded from below\textup{;}
\item \label{s-qharm-2}
  $E$ is removable for quasisuperharmonic functions in $\Om \setm E$
    which are bounded from below\textup{;}
\item \label{s-a}
$\Cpmu(E)=0$.
\end{enumerate}

Moreover, in~\ref{s-qharm} and~\ref{s-qharm-2}
the quasisuperharmonicity constant $Q$ is
preserved.
\end{thm}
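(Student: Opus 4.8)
The natural skeleton is \ref{s-a}$\imp$\ref{s-pharm-2}$\imp$\ref{s-pharm} and \ref{s-a}$\imp$\ref{s-qharm-2}$\imp$\ref{s-qharm}, supplemented by the two converses \ref{s-pharm}$\imp$\ref{s-a} and \ref{s-qharm}$\imp$\ref{s-a}. The implications out of \ref{s-a} are precisely the zero-capacity removability theorems quoted above (Tolksdorff, Heinonen--Kilpel\"ainen, \cite{HeKiMa} and \cite{ABremove}); their proofs use only boundedness from below, and in the quasi case they keep the constant $Q$, which supplies the ``moreover'' clause. The inclusions \ref{s-pharm-2}$\imp$\ref{s-pharm} and \ref{s-qharm-2}$\imp$\ref{s-qharm} are trivial. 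So everything reduces to the nonremovability statement: if $\Cpmu(E)>0$ then neither \ref{s-pharm} nor \ref{s-qharm} holds. I may assume $E$ has empty interior, since a solid piece of $E$ is nonremovable by a direct pasting (non-uniqueness) argument.

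For class \ref{s-pharm} I would argue locally. Using countable subadditivity of $\Cpmu$ together with compactness, choose a ball $B$ with $\overline{B}\subset\Om$, with $\bdy B\cap E$ of zero capacity, and with $\Cpmu(E\cap\overline B)>0$; put $K=E\cap\overline B$. The decisive feature of this choice is that $B\setm E=B\setm K$. Let $u_K$ be the \A-capacitary potential of $K$ in $2B\Subset\Om$, so that $0\le u_K\le1$, $u_K=1$ q.e.\ on $K$, $u_K$ is \A-harmonic in $2B\setm K$ and \A-superharmonic in $2B$, with Riesz measure equal to the (nonzero) capacitary measure of $K$. Set
\[
   u=1-u_K \ \text{ in } 2B, \qquad u\equiv1 \ \text{ in } \Om\setm 2B .
\]
On $\Om\setm K$ this $u$ is continuous, agrees with an \A-harmonic function in $2B\setm K$, and has the correct inward kink across $\bdy(2B)$, so it is a bounded \A-superharmonic function there, in particular on $\Om\setm E$.

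The crux is that $u$ has no \A-superharmonic extension. If $\ut$ were one, then $\ut=u=1-u_K$ on $B\setm K=B\setm E$, so $s:=\ut+u_K$ is \A-superharmonic on $B$ and equals $1$ on the dense set $B\setm K$; lower semicontinuity forces $s\le1$ and the minimum principle forces $s\ge1$, whence $s\equiv1$ and $\ut=1-u_K$ throughout $B$. But $1-u_K$ is genuinely \A-subharmonic there (its Riesz measure on $K$ is minus the capacitary measure, hence nonzero), contradicting superharmonicity of $\ut$. This is the mechanism by which superharmonicity detects a set of positive capacity that harmonicity cannot: the only admissible filling of the singularity has the wrong sign. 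Because the argument is purely local it is blind to the global type of $(\R^n,\mu)$, which is exactly why, in contrast to Theorem~\ref{thm-main-Rn}, no parabolic exceptional case survives here. The sole configuration outside the reach of a ball, namely a single point $x_0$ with $\Cpmu(\{x_0\})>0$ (forcing $p>n$), is dispatched by the explicit model $\min(|x-x_0|^{(p-n)/(p-1)},1)$, which is \A-superharmonic on $\Om\setm\{x_0\}$ but whose lower-semicontinuous regularization is \A-subharmonic at $x_0$.

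The hard part is class \ref{s-qharm}, where the forcing above is unavailable because $s=\ut+u_K$ need not be quasisuperharmonic --- quasisuperminimizers do not add. Here I would instead combine the harmonic characterization already proved in Theorem~\ref{thm-main-Rn} with the compact-set nonremovability for quasisuperharmonic functions from \cite[Proposition~7.2]{ABremove}, applied to $K$ inside $2B$, and then transport it to $\Om\setm E$ through the same identity $B\setm E=B\setm K$, checking that the quasisuperharmonicity constant is controlled throughout. Making this globalization rigorous --- ruling out, as the introduction stresses already on $\R$, that the local obstruction on $K$ is healed by the behaviour of a quasisuperharmonic function on $E\setm K$ --- while simultaneously tracking a single constant $Q$, is the step I expect to demand the most care, and it is the reason the quasi and \A\ cases, though stated together, require genuinely different arguments.
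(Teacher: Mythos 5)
Your skeleton of implications and the idea of reducing everything to one counterexample are sound (the paper does exactly that: it builds a single bounded \A-superharmonic function on $\Om\setm E$ with no quasisuperharmonic extension whatsoever, which kills \ref{s-pharm}--\ref{s-qharm-2} simultaneously). But your converse has a gap at the very first step: the ball $B$ with $\Cpmu(\bdy B\cap E)=0$ and $\Cpmu(E\cap\overline{B})>0$ need not exist. Take unweighted $\R^2$, $p>2$, $\Om=B(0,1)$ and $E=\{(r\cos(1/r),r\sin(1/r)):0<r<1\}\cup\{(0,0)\}$, a spiral which is relatively closed in $\Om$, has empty interior, but is connected and not contained in any ball $B$ with $\overline{B}\subset\Om$. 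Connectedness forces $\bdy B\cap E\ne\emptyset$ for every such ball meeting $E$, and since $p>n$ every nonempty set has positive capacity; so no admissible $B$ exists, yet $E$ is not a singleton --- refuting your claim that the single point is the only configuration outside the reach of a ball. (Your explicit model for the singleton case is also the \emph{unweighted} fundamental solution, which is not \A-superharmonic for general $w$ and \A.) This failure mode is precisely why the paper splits its construction around a point $\zeta\in E$ chosen nearest to an exterior point: Case~1 (some small sphere around $\zeta$ misses $E$) versus Case~2 (every small sphere meets $E$), the latter requiring the chain of balls $B_j$ hugging $E$, the Kellogg property, and the measure-density Lemma~\ref{lem-density} to produce a lower-semicontinuity contradiction at $\zeta$.

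The second gap is in your ``crux'': $s=\ut+u_K$ is in general \emph{not} \A-superharmonic, because supersolutions of the nonlinear equation $\Div\A(x,\nabla u)=0$ do not add --- this is exactly the obstruction you correctly identify for quasisuperminimizers, and it applies equally to \A-superharmonic functions whenever the equation is nonlinear (essentially any $p\ne2$). So the contradiction mechanism for class \ref{s-pharm} collapses. The natural repair is a minimum-principle argument (the extension equals $1$ on a sphere enclosing a positive-capacity piece of $E$, hence is $\ge1$ inside, yet equals $1-u_K<1$ somewhere), but to pin the extension on that sphere you need the sphere to miss $E$ entirely --- zero capacity of the slice does not control the values of a merely lower-semicontinuous superharmonic extension there --- which lands you back in the paper's Case~1 hypothesis and leaves the spiral-type Case~2 untouched. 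Your plan for class \ref{s-qharm} (Theorem~\ref{thm-main-Rn} plus a ``transport'' of the metric-space nonremovability result for compact sets) is not an argument yet: that result produces \emph{some} nonextendable function on $2B\setm K$ with uncontrolled behaviour near $\bdy(2B)$, so it cannot be pasted into a function on all of $\Om\setm E$ without redoing the construction. The paper avoids all of this by making its Case~1/Case~2 functions \A-superharmonic via Perron solutions and the pasting lemma, and by deriving the contradiction against \emph{any} quasisuperharmonic extension, so that no separate treatment of the four classes (and no tracking of $Q$) is needed in the nonremovability direction. Also, your preliminary reduction to $\interior E=\emptyset$ via ``non-uniqueness'' does not establish nonremovability for superharmonic classes (extensions need not be continuous, so the oscillation argument of Lemma~\ref{lem-intE-nonempty} does not apply); the paper's construction simply never needs that reduction.
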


In fact, when $\Cpmu(E)>0$ we construct a bounded
\A-superharmonic function on $\Om \setm E$
which has no quasisuperharmonic extension to $\Om$ (neither bounded nor unbounded).

A \emph{quasiharmonic function} is a continuous quasiminimizer.
Quasiminimizers were introduced by
Giaquinta and Giusti~\cite{GG1}, \cite{GG2} as a tool for a unified
treatment of variational integrals, elliptic equations and
quasiregular mappings on $\R^n$.  
They showed that De Giorgi's
method could be extended to quasiminimizers and obtained, in particular,
their local H\"older continuity. 
Thus every quasiminimizer has a continuous representative,
and it is this representative that is \emph{quasiharmonic}.
Similarly, every quasisuperminimizer has a unique
quasisuperharmonic representative 
(just as \p-supersolutions have unique \p-superharmonic representatives). 

Kinnunen--Martio~\cite{KiMa03} showed that one can
build a potential theory on quasiharmonic functions.
In particular, they introduced the quasisuperharmonic 
functions, and studied their potential theory on metric spaces.
Martio--Sbordone~\cite{MaSb} showed that quasiminimizers
have an interesting theory also in the one-dimensional case
on unweighted $\R$. 
Our Theorem~\ref{thm-R-qharm} is a contribution to this case.
It seems as if all papers (so far) studying quasiminimizers
and/or quasi(super)harmonic functions on weighted $\R^n$, $n \ge 2$, do so as
a particular case of the corresponding theory on metric spaces
(cf.\ however \cite[Section~3.13]{HeKiMa}).
See e.g.\ the introduction in Bj\"orn--Bj\"orn--Korte~\cite{BBKorte}
for further references on quasiminimizers.

Various other types of
removable singularities for \p-(super)harmonic, \A-(super)\-harmonic
and quasi(super)harmonic have been studied in
Heinonen--Kil\-pe\-l\"ai\-nen~\cite{HeKi88},
Kilpel\"ainen--Zhong~\cite{KilpZhong02},
Pokrovskii~\cite{pokrovskii},
M\"ak\"al\"ainen~\cite{makalainen}
and   Bj\"orn~\cite{ABremove}.
For harmonic and analytic functions removable singularities
lying in many different types of function spaces have been studied,
but this is not the right place to discuss this.

\begin{ack}
The  author was supported by the Swedish Research Council,
grant 2016-03424.
The author is grateful to Juha Heinonen, Tero Kilpel\"ainen,
Nageswari Shanmugalingam and Jana Bj\"orn for fruitful discussions.
\end{ack}

\section{Notation and preliminaries}
\label{sect-prelim}

\emph{Throughout the paper we let $1<p< \infty$, equip $\R^n$ with
$d\mu=w\,dx$, where $w$ is a \p-admissible weight,
and assume that 
$\A$ satisfies the degenerate ellipticity
conditions\/ \textup{(3.3)--(3.7)}
in\/ \textup{\cite[p.~56]{HeKiMa}} \textup{(}with the parameter $p$\textup{)}.
We also let $\Om \subset \R^n$ be a nonempty open set
and let $E \subsetneq \Om$ be a relatively closed subset
such that no component of $\Om$ is contained in $E$.}

\medskip

We will follow the notation in
Heinonen--Kilpel\"ainen--Martio~\cite{HeKiMa}.
In particular, \p-admissible weights 
are defined in Section~1.1 in \cite{HeKiMa}.
See also 
Corollary~20.9 in~\cite{HeKiMa}
and Proposition~A.17 in~\cite{BBbook}
for characterizations of \p-admissible weights.
We refer to 
Chapters~6 and~7 in~\cite{HeKiMa} for
the definitions of \A-harmonic and \A-superharmonic functions.

\begin{deff} \label{def-quasimin}
Let $Q \ge 1$.
A function $u \in \Hploc(\Om,\mu)$ is a
\emph{$Q$-quasisuperminimizer} in $\Om$
if 
\begin{equation} \label{eq-qmin}
      \int_{\phi \ne 0} |\nabla u|^p \, d\mu
           \le Q\int_{\phi \ne 0} |\nabla (u+\phi)|^p \, d\mu
           \quad \text{for all nonnegative } \phi \in \Hp_0(\Om,\mu).
\end{equation}
If \eqref{eq-qmin} holds for all $\phi \in \Hp_0(\Om,\mu)$,
then $u$ is a \emph{$Q$-quasiminimizer}.
A \emph{$Q$-quasi\-har\-mon\-ic} function is a continuous $Q$-quasiminimizer.
\end{deff}

These notions depend on $p$ and $\mu$, but we have refrained
from making that explicit in the notation.

Kinnunen--Martio~\cite{KiMa03} introduced quasisuperharmonic
functions and studied their potential theory.
For us the following definition will be convenient,
see \cite{KiMa03}, \cite{ABkellogg} and \cite{ABremove} for equivalent
definitions and characterizations
of quasi(super)minimizers and quasi(super)harmonic functions.

\begin{deff} \label{def-qsh}
A function $u : \Om \to (-\infty,\infty]$
is  \emph{$Q$-quasisuperharmonic} in\/ $\Om$
if $u$ is not identically\/ $\infty$ in any component of\/ $\Om$,
$u$ is \emph{essliminf-regularized} i.e.
\[ 
  u(x)=\essliminf_{y \to x} u(y) \quad \text{for }x \in \Om,
\] 
and\/ $\min\{u,k\}$ is a $Q$-quasisuperminimizer in\/ $\Om$ for
every $k \in \R$.
\end{deff}

\A-harmonic  and \A-superharmonic functions
are $Q$-quasiharmonic resp.\ $Q$-quasi\-super\-harmonic,
with $Q$ only depending on \A,
see \cite[Section~3.13]{HeKiMa},
whereas \p-(super)harmonic functions are the same
as $1$-quasi(super)harmonic functions.

We next collect the
key removability results for \A-harmonic and quasiharmonic functions,
see \cite[Theorem~7.36]{HeKiMa} and \cite[Theorem~6.2]{ABremove}.

\begin{thm} \label{thm-removability-qharm}
Assume that the Sobolev capacity $\Cpmu(E)=0$.
Then $E$ is removable for \A-harmonic and $Q$-quasiharmonic functions
in $\Om \setm E$,
i.e.\ 
every bounded \A-harmonic\/ \textup{(}resp.\ $Q$-quasiharmonic\/\textup{)}
function in\/ $\Om \setm E$
has a bounded \A-harmonic\/ \textup{(}resp.\ $Q$-quasiharmonic\/\textup{)}
extension to $\Om$.
\end{thm}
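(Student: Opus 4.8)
\section*{Proof proposal}

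The plan is to reduce the statement to two facts about a set of zero Sobolev capacity: that it is removable for the Sobolev space (so the function extends to $\Hploc(\Om)$), and that it is negligible in the weak formulation of the equation. Since $\Cpmu(E)=0$ forces $\mu(E)=0$, the bounded function $u$ on $\Om\setm E$ is already defined $\mu$-a.e.\ on $\Om$, and the only tasks are to produce a weak gradient across $E$ and then select the right (continuous) representative. Throughout I use the growth and ellipticity parts of the structure conditions together with the Caccioppoli inequality, all of which hold on $\Om\setm E$.

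First I would show $u\in\Hploc(\Om)$. Fix $\Om'\Subset\Om$ and $\psi\in C_0^\infty(\Om)$ with $\psi=1$ on $\Om'$. Because $\Cpmu(E)=0$ and the piece $E\cap\operatorname{supp}\psi$ is compact (as $E$ is relatively closed in $\Om$), there are cut-off functions $\eta_j\in\Hp_0(\Om,\mu)$ with $0\le\eta_j\le1$, equal to $1$ in a neighbourhood of $E\cap\operatorname{supp}\psi$, with $\eta_j\to0$ $\mu$-a.e.\ and $\|\nabla\eta_j\|_{L^p(\mu)}\to0$. Then $\zeta_j:=\psi(1-\eta_j)$ is supported in $\Om\setm E$, and the Caccioppoli inequality for the \A-harmonic (resp.\ $Q$-quasiminimizing) $u$ gives
\[
\int \zeta_j^{\,p}|\nabla u|^p\,d\mu \le C\int|u|^p|\nabla\zeta_j|^p\,d\mu \le C\|u\|_\infty^p\Bigl(\int|\nabla\psi|^p\,d\mu+\int|\nabla\eta_j|^p\,d\mu\Bigr).
\]
Letting $j\to\infty$, using Fatou on the left and $\|\nabla\eta_j\|_{L^p(\mu)}\to0$ on the right, yields $\int_{\Om'}|\nabla u|^p\,d\mu<\infty$; boundedness of $u$ is used decisively here. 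With this bound, I identify the distributional gradient: testing $\phi\in C_0^\infty(\Om)$ against the competitor $\phi(1-\eta_j)$ and integrating by parts on $\Om\setm E$ gives $\int_{\Om} u\,\partial_i(\phi(1-\eta_j))\,dx=-\int_{\Om}(\partial_i u)\phi(1-\eta_j)\,dx$, and as $j\to\infty$ the terms carrying $\nabla\eta_j$ vanish (by H\"older's inequality, local integrability of $w^{-1/(p-1)}$ for the \p-admissible $w$, and $\|\nabla\eta_j\|_{L^p(\mu)}\to0$, again using $\|u\|_\infty<\infty$). Thus the $L^p(\mu)$-gradient of $u$ on $\Om\setm E$ is also the distributional gradient on $\Om$, so $u\in\Hploc(\Om)$.

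Next I would transfer the defining (in)equality across $E$. In the \A-harmonic case, for $\phi\in\Hp_0(\Om,\mu)$ the competitor $\phi(1-\eta_j)\in\Hp_0(\Om\setm E,\mu)$ satisfies $\int\A(x,\nabla u)\cdot\nabla(\phi(1-\eta_j))\,d\mu=0$; since the growth condition gives $\A(x,\nabla u)\in L^{p'}\loc(\mu)$, the same H\"older/Fatou limit removes the $\nabla\eta_j$ terms and produces $\int\A(x,\nabla u)\cdot\nabla\phi\,d\mu=0$, so $u$ is a weak solution in all of $\Om$. In the quasiminimizing case I instead insert $\phi(1-\eta_j)$ into \eqref{eq-qmin}; since $\mu(E)=0$ the regions $\{\phi(1-\eta_j)\ne0\}$ exhaust $\{\phi\ne0\}\setm E$ and the energies $\int|\nabla u|^p$ and $\int|\nabla(u+\phi(1-\eta_j))|^p$ converge to the corresponding integrals with $\phi$, so \eqref{eq-qmin} passes to $\Om$ with the same constant $Q$.

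Finally, the regularity theory supplies the representative: a bounded weak solution of the \A-harmonic equation is locally H\"older continuous, and every quasiminimizer has a locally H\"older continuous (quasiharmonic) representative. This representative is \A-harmonic (resp.\ $Q$-quasiharmonic) in $\Om$, coincides $\mu$-a.e.\ with $u$, and hence agrees with the original continuous $u$ on the dense open set $\Om\setm E$; it is the desired extension. It is bounded by $\|u\|_{L^\infty(\Om\setm E)}$ (by continuity and density of $\Om\setm E$) and unique, since any two continuous extensions agree on the dense set $\Om\setm E$. The main obstacle is the first step: securing the uniform local $L^p$ bound on $\nabla u$ up to $E$ through Caccioppoli tested against the capacity cut-offs, which is precisely where boundedness of $u$ is indispensable; a secondary subtlety is verifying in the quasiminimizing case that the limit procedure does not inflate $Q$.
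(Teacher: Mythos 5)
Your overall strategy -- a Caccioppoli bound across $E$ via capacity cut-offs, transfer of the equation resp.\ of \eqref{eq-qmin}, then regularity theory to select the continuous representative -- is the standard one, and it is essentially how the quasiharmonic case is proved in the literature. Note that the paper itself gives no proof of this theorem: it quotes \cite[Theorem~7.36]{HeKiMa} for the \A-harmonic case and \cite[Theorem~6.2]{ABremove} for the quasiharmonic case (in \cite{HeKiMa} the \A-harmonic statement is in turn obtained via removability for \A-superharmonic functions, their Theorem~7.35, rather than by your route). Your transfer step (modulo the standard reduction to bounded, compactly supported test functions in the quasiminimizing case) and your final regularity/uniqueness step are fine.

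There is, however, a genuine gap where you upgrade the local energy bound to $u\in\Hploc(\Om,\mu)$. You do this by identifying the \emph{distributional} gradient of $u$, and you dispose of the terms containing $\nabla\eta_j$ using ``local integrability of $w^{-1/(p-1)}$ for the \p-admissible $w$''. That claim is false when $n\ge2$: \p-admissible weights need not be Muckenhoupt $A_p$ weights (only for $n=1$ does \p-admissibility force the $A_p$ condition, by \cite[Theorem~2]{BjBuKe}, which is exactly what Section~4 of the paper invokes). For instance, if $p<n$ then $w(x)=|x|^{n(p-1)}$ is \p-admissible, being a power $J_f^{1-p/n}$ of a quasiconformal Jacobian (see \cite[Chapter~15]{HeKiMa}), yet $w^{-1/(p-1)}=|x|^{-n}$ is not locally integrable, so your H\"older estimate yields nothing. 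More fundamentally, in the framework of \cite{HeKiMa} used by the paper, $\Hp(\Om,\mu)$ is \emph{defined} as a completion of smooth functions, so exhibiting a distributional gradient in $L^p(\mu)$ does not by itself place $u$ in $\Hploc(\Om,\mu)$; the identification of the two notions (an ``$H=W$'' theorem) is available for $A_p$ weights but not for general \p-admissible ones. The repair is standard and uses only ingredients you already have: work multiplicatively with the same cut-offs. The functions $u\psi(1-\eta_j)$ lie in $\Hp_0(\Om\setm E,\mu)\subset\Hp_0(\Om,\mu)$, and your Caccioppoli bound together with $\|u\|_\infty<\infty$, $\eta_j\to0$ a.e.\ and $\|\eta_j\|_{\Hp(\R^n,\mu)}\to0$ shows that they form a Cauchy sequence in $\Hp(\Om,\mu)$ whose $L^p(\mu)$-limit is $u\psi$; hence $u\psi\in\Hp(\Om,\mu)$, so $u\in\Hploc(\Om,\mu)$ with gradient agreeing a.e.\ with $\nabla u$ on $\Om\setm E$. (Equivalently, one may cite the known removability of capacity-zero sets for the weighted Sobolev, i.e.\ Newtonian, space.) With that replacement, the rest of your argument, which uses only the weighted H\"older inequality, goes through.
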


The following Harnack inequality
for quasiharmonic functions is important.
In the weighted setting it was first
obtained
by Kinnunen--Shan\-mu\-ga\-lin\-gam~\cite[Corollary~7.3]{KiSh01}
(who obtained it in metric spaces).

\begin{thm} \textup{(The Harnack inequality)}
\label{thm-Harnack}
There exists a constant $C$, depending only on
$Q$, $p$ and $\mu$,
such that for all 
$Q$-quasiharmonic functions $u  \ge 0$ in $\Om$ and all balls $B$ with
$6B\subset\Om$, 
\[ 
	\sup_B u \le C \inf_B u.
\] 
\end{thm}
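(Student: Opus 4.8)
The plan is to obtain the two-sided estimate by the De Giorgi--Nash--Moser iteration scheme, suitably adapted to quasiminimizers. Since a $Q$-quasiharmonic $u$ is simultaneously a quasisubminimizer and a quasisuperminimizer, and has a continuous representative, it suffices to establish separately a local supremum bound and a weak Harnack inequality: there are exponents $s,t>0$ and a constant $C$, depending only on $Q$, $p$ and $\mu$, such that for all balls with $6B\subset\Om$,
\[
  \sup_B u \le C \Bigl(\frac{1}{\mu(2B)}\int_{2B} u^s\,d\mu\Bigr)^{1/s}
  \quad\text{and}\quad
  \Bigl(\frac{1}{\mu(2B)}\int_{2B} u^t\,d\mu\Bigr)^{1/t} \le C \inf_B u.
\]
The whole argument rests on the two geometric consequences of \p-admissibility of $w$: the measure $\mu$ is doubling, and $\mu$ supports a \p-Poincar\'e inequality. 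These are precisely the ingredients that make the iteration and the Sobolev--Poincar\'e embeddings available on $(\R^n,\mu)$.

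First I would derive the relevant Caccioppoli (energy) estimates directly from the quasiminimizing inequality \eqref{eq-qmin}. For the supremum bound I test with $\phi=-\eta^p(u-k)_+$, truncated at levels $k$, use \eqref{eq-qmin}, and absorb the cross terms by Young's inequality; the factor $Q$ enters here but only affects the final constant. Combining the resulting energy bound on super-level sets with the Sobolev--Poincar\'e inequality and iterating over a shrinking sequence of radii yields the local boundedness estimate above. For the lower bound the same machinery applied to powers $u^{\be}$ with $\be<0$, and to $\log u$ --- again using \eqref{eq-qmin} with test functions of the form $\eta^p u^{\be}$ and $\eta^p u^{-1}$ --- gives both the reverse iteration controlling $\inf_B u$ from below by a small negative power, and a Caccioppoli estimate for $\log u$.

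The bridging step is the crucial one. The Caccioppoli estimate for $\log u$ shows, via the \p-Poincar\'e inequality, that $\log u$ has bounded mean oscillation on balls, so the John--Nirenberg lemma gives a small $\eps>0$ and a constant $C$ with
\[
  \Bigl(\frac{1}{\mu(2B)}\int_{2B} u^{\eps}\,d\mu\Bigr)^{1/\eps}
  \le C \Bigl(\frac{1}{\mu(2B)}\int_{2B} u^{-\eps}\,d\mu\Bigr)^{-1/\eps}.
\]
A standard exponent-reduction lets one take $s=t=\eps$ in the two one-sided estimates, and chaining the crossover inequality with the supremum bound (at exponent $\eps$) and the weak Harnack inequality (at exponent $-\eps$) produces $\sup_B u \le C\inf_B u$ with $C=C(Q,p,\mu)$, as claimed.

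The main obstacle is that a quasiminimizer satisfies no Euler--Lagrange equation and obeys no comparison or linearity principle, so the classical test-function manipulations available for \A-harmonic functions cannot be invoked; every estimate must be extracted from the one-sided inequality \eqref{eq-qmin} alone, while tracking that the constant depends on $Q$ in a controlled way and is otherwise uniform in the ball. A secondary technical point is aligning the exponent $\eps$ on both sides of the crossover, where the doubling property and the self-improving nature of the Poincar\'e inequality are used. All of this is by now standard in the metric-space formulation, which is why the statement is quoted from Kinnunen--Shanmugalingam~\cite[Corollary~7.3]{KiSh01}.
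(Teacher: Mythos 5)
There is a genuine gap, and it sits exactly at the step you yourself flag as the ``main obstacle'' without resolving it. Your plan for the lower bound is Moser's scheme: Caccioppoli estimates for negative powers $u^{\be}$ and for $\log u$, obtained by ``using \eqref{eq-qmin} with test functions of the form $\eta^p u^{\be}$ and $\eta^p u^{-1}$'', followed by John--Nirenberg and the crossover. But \eqref{eq-qmin} is a one-sided energy comparison with a multiplicative constant $Q$, not a weak formulation of an equation. Inserting $\phi=\eta^p u^{-1}$ only yields
\[
\int_{\phi \ne 0} |\nabla u|^p\,d\mu \le Q \int_{\phi \ne 0} |\nabla (u+\eta^p u^{-1})|^p\,d\mu,
\]
and when the right-hand side is expanded, the leading term $Q\int |\nabla u|^p\,d\mu$ cannot be absorbed into the left-hand side since $Q>1$; there is no cancellation of top-order terms, which in Moser's argument comes precisely from the first-order (Euler--Lagrange) identity. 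For $Q>1$ one cannot linearize \eqref{eq-qmin} in $\phi$ at all (taking $\phi=\pm t\psi$, $t\to 0$, the factor $Q$ destroys the first-order information), so no log-Caccioppoli estimate, and hence no BMO/John--Nirenberg crossover, is available. This is not a technicality: it is the reason the Harnack inequality for quasiminimizers was historically proved by a different route. Your proposal is thus internally inconsistent --- the last paragraph correctly states that classical test-function manipulations cannot be invoked, while the second and third paragraphs consist of exactly those manipulations.

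The argument that does work, and the one behind the result the paper quotes (Kinnunen--Shanmugalingam~\cite[Corollary~7.3]{KiSh01}, going back to DiBenedetto--Trudinger on unweighted $\R^n$), is De Giorgi's method: from \eqref{eq-qmin} one only uses admissible variations of the form $\phi=-\eta^p(u-k)_\limplus$ (and the analogue for $(k-u)_\limplus$), which legitimately give Caccioppoli inequalities on level sets, i.e.\ membership in De Giorgi classes; local boundedness then follows by De Giorgi iteration, while the weak Harnack inequality is obtained by measure-theoretic expansion-of-positivity and a Krylov--Safonov type covering argument, with doubling and the \p-Poincar\'e inequality (the two consequences of \p-admissibility you correctly identify) as the only structural inputs. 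Note also that the paper itself offers no proof of Theorem~\ref{thm-Harnack}; it is stated with a citation, so the standard against which your sketch must be judged is the cited proof, whose essential mechanism your outline replaces with one that fails for $Q>1$.
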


The following are two important 
consequences of
the Harnack inequality.

\begin{thm} \label{thm-max}
\textup{(The strong maximum principle)}
  A quasiharmonic function that attains
  its maximum in a connected open set $\Om$ must be constant in $\Om$.
\end{thm}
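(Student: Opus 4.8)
The plan is to reduce the statement to the Harnack inequality (Theorem~\ref{thm-Harnack}) applied to a suitable nonnegative quasiharmonic function. Suppose $u$ is $Q$-quasiharmonic in the connected open set $\Om$ and attains its maximum $M = \sup_\Om u$ at some interior point. The first step I would carry out is to observe that the class of $Q$-quasiminimizers is invariant under the transformation $u \mapsto M - u$. Adding the constant $M$ leaves all gradients unchanged, so it suffices to check that $-u$ is again a $Q$-quasiminimizer. Given a test function $\psi \in \Hp_0(\Om,\mu)$, I would apply \eqref{eq-qmin} for $u$ with the competitor $\phi = -\psi$: since $|\nabla(-u)| = |\nabla u|$, $\{\phi \ne 0\} = \{\psi \ne 0\}$, and $(-u) + \psi = -(u+\phi)$ so that $|\nabla((-u)+\psi)| = |\nabla(u+\phi)|$, the defining inequality for $u$ transcribes directly into the defining inequality for $-u$ with the same constant $Q$. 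Continuity and membership in $\Hploc(\Om,\mu)$ are preserved as well, so $v := M - u$ is a nonnegative $Q$-quasiharmonic function on $\Om$ with $\inf_\Om v = 0$.

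Next I would show that the set $A = \{x \in \Om : v(x) = 0\}$, which is nonempty (the maximum is attained) and relatively closed (by continuity of $v$), is also open; by connectedness of $\Om$ this forces $A = \Om$ and hence $u \equiv M$. To see openness, fix $x_0 \in A$ and choose a ball $B$ centered at $x_0$ with $6B \subset \Om$, which is possible since $\Om$ is open. The Harnack inequality gives $\sup_B v \le C \inf_B v$, and since $x_0 \in B$ we have $\inf_B v \le v(x_0) = 0$. As $v \ge 0$, this yields $v \equiv 0$ on $B$, so $B \subset A$ and $A$ is open.

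The only point requiring genuine care is the invariance of the quasiminimizer class under $u \mapsto M - u$ with preservation of the constant $Q$; once that is in hand, the remainder is the standard open--closed connectedness dichotomy driven by Harnack. I therefore expect this invariance to be the main (albeit minor) obstacle, since the defining inequality \eqref{eq-qmin} is not manifestly symmetric in $u$ and one must verify explicitly that replacing $\psi$ by $-\psi$ converts the estimate for $u$ into the estimate for $-u$ over the same level set $\{\psi \ne 0\}$. I would also note that the same reasoning gives a minimum principle, by applying the argument to $u - \inf_\Om u$ directly rather than to $M - u$.
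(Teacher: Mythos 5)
Your proof is correct, and it follows exactly the route the paper intends: the paper states Theorem~\ref{thm-max} without proof as a ``consequence of the Harnack inequality,'' and your argument---negation invariance of the $Q$-quasiminimizer class (valid here since \eqref{eq-qmin} for quasiminimizers admits all test functions $\phi \in \Hp_0(\Om,\mu)$, not just nonnegative ones), followed by the open--closed connectedness argument on $\{M-u=0\}$ using Theorem~\ref{thm-Harnack} on balls $B$ with $6B \subset \Om$---is the standard way to fill in that derivation. No gaps; your identification of the negation invariance as the one point needing explicit verification is apt.
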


\begin{thm} \label{thm-Liouville}
\textup{(The Liouville theorem)}
  A quasiharmonic function in $\Om=\R^n$ which is bounded
  from below\/ \textup{(}or from above\/\textup{)} must be constant.
\end{thm}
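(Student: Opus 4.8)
The plan is to deduce the theorem directly from the Harnack inequality (Theorem~\ref{thm-Harnack}), after two elementary reductions to the case of a nonnegative function.

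First I would note that quasiharmonicity is preserved under the two operations needed here. Adding a constant $c$ to a $Q$-quasiminimizer $u$ leaves $\nabla u$ unchanged, so both sides of \eqref{eq-qmin} are unaffected and $u+c$ is again $Q$-quasiharmonic. Likewise, replacing $u$ by $-u$ and simultaneously $\phi$ by $-\phi$ fixes the set $\{\phi\ne0\}$ and leaves $|\nabla u|^p$ and $|\nabla(u+\phi)|^p$ unchanged; since $\phi\mapsto-\phi$ is a bijection of $\Hp_0(\Om,\mu)$, this shows that $-u$ is $Q$-quasiharmonic as well. The second remark reduces the bounded-from-above case to the bounded-from-below case, by applying the result to $-u$, so I would only need to treat functions bounded from below.

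For such a $u$ I would set $m:=\inf_{\R^n}u>-\infty$ and $v:=u-m$, which by the first remark is $Q$-quasiharmonic, satisfies $v\ge0$, and has $\inf_{\R^n}v=0$. Fixing an arbitrary $x_0\in\R^n$, for every $r>|x_0|$ the ball $B=B(0,r)$ satisfies $6B\subset\R^n=\Om$, so Theorem~\ref{thm-Harnack} yields
\[
  v(x_0)\le\sup_{B(0,r)}v\le C\inf_{B(0,r)}v
\]
with $C$ independent of $r$. Letting $r\to\infty$, the right-hand infimum decreases to $\inf_{\R^n}v=0$ (since $\R^n=\bigcup_r B(0,r)$), forcing $v(x_0)\le0$ and hence $v(x_0)=0$. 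As $x_0$ is arbitrary, $v\equiv0$ and $u$ is constant.

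The argument is short, and the only delicate points are the two invariance remarks: they are precisely what allow the one-sided Harnack inequality---valid only for nonnegative quasiharmonic functions---to be leveraged against a function bounded on just one side. The limit $\inf_{B(0,r)}v\to\inf_{\R^n}v$ is immediate from monotonicity in $r$, so no quantitative estimates beyond Harnack are required; I expect the sign-flip invariance of quasiminimizers to be the one step worth stating explicitly, since it is what makes the ``from above'' case a genuine consequence of the ``from below'' case.
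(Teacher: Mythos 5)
Your proof is correct, and it follows exactly the route the paper intends: the paper states the Liouville theorem without proof as one of the ``two important consequences of the Harnack inequality'' (Theorem~\ref{thm-Harnack}), and your argument---normalizing to a nonnegative function via the constant-shift and sign-flip invariances of $Q$-quasiminimizers, then applying Harnack on balls $B(0,r)$ and letting $r\to\infty$---is the standard derivation being alluded to. No gaps; the two invariance remarks and the limit $\inf_{B(0,r)}v\to 0$ are all justified correctly.
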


\begin{deff}
The space $(\R^n,\mu)$ is \emph{\p-parabolic} if
the variational capacity
\[
\cpmu(B,\R^n)=0
\]
for some ball $B$,
or equivalently for all balls $B$.

If $(\R^n,\mu)$ is not \p-parabolic then it is called \emph{\p-hyperbolic}.
\end{deff}

It is easy to see that unweighted $\R^n$ is \p-parabolic if and only if $p \ge n$.
See 
\cite[Theorem~9.22]{HeKiMa}
and Proposition~\ref{prop-char-reg} below
for various characterizations of \p-parabolicity.
Bj\"orn--Bj\"orn--Lehrb\"ack~\cite[Theorem~5.5]{BBLehIntGreen}
recently showed that $(\R^n,\mu)$ is \p-parabolic if and only if
\begin{equation}   \label{eq-int-parab=infty}
\int_{1}^\infty \biggl( \frac{r}{\mu(B(0,r))} \biggr)^{1/(p-1)} \,dr=\infty.
\end{equation}
They obtained this characterization in metric spaces under
very mild assumptions.

\section{Weighted
  \texorpdfstring{$\R^n$}{Rn} with \texorpdfstring{$n\ge2$}{n>=2};
Theorems~\ref{thm-main-intro-unweighted} and~\ref{thm-main-Rn}}
\label{sect-Rn}

\emph{In addition to the general assumptions from the beginning
  of Section~\ref{sect-prelim}, throughout this section we assume that $n \ge 2$.}

\medskip

Our aim in this section is
to prove Theorem~\ref{thm-main-Rn}.
(Theorem~\ref{thm-main-intro-unweighted}
will then be  a more or less direct consequence.)
To do so we need some lemmas.

\begin{lem} \label{lem-x->infty}
Let $F \subset \R^n$
 be a bounded closed set and $\Om=\R^n \setm F$.
Also let $u$ be a bounded  quasiharmonic function in $\Om$.
Then $\lim_{x \to \infty} u(x)$ exists.
\end{lem}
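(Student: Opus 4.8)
The plan is to reduce to an exterior domain and then combine a scale-invariant Harnack inequality on spheres with the strong maximum principle. First I would fix $R$ with $F\subset B(0,R)$, so that $u$ is a bounded $Q$-quasiharmonic function in the connected open set $\{x\in\R^n:|x|>R\}\subset\Om$ (quasiharmonicity restricts to open subsets, since admissible test functions in \eqref{eq-qmin} have compact support). Adding a constant to $u$ preserves $Q$-quasiharmonicity, by \eqref{eq-qmin}. For $t>R$ set $M(t)=\max_{|x|=t}u$ and $m(t)=\min_{|x|=t}u$; these are attained since the spheres are compact and $u$ is continuous. As $m(t)\le u(x)\le M(t)$ whenever $|x|=t$, it suffices to prove that $\lim_{t\to\infty}M(t)$ and $\lim_{t\to\infty}m(t)$ exist and coincide.

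Next I would upgrade Theorem~\ref{thm-Harnack} to a Harnack inequality on spheres. For large $t$ one covers the sphere $\{|x|=t\}$ by balls $B_i=B(x_i,t/24)$ with $x_i$ on the sphere; then $6B_i\subset\{|x|>R\}\subset\Om$ once $t>2R$, and, by scaling, both the number of covering balls and the length of a chain of overlapping balls joining any two of them along the (connected, since $n\ge2$) sphere are bounded by a constant $N=N(n)$. Applying Theorem~\ref{thm-Harnack} on each ball and chaining yields a constant $C_1=C_1(Q,p,\mu,n)$ with
\[
\sup_{|x|=t} g \le C_1 \inf_{|x|=t} g
\]
for every nonnegative $Q$-quasiharmonic $g$ on a neighbourhood of the sphere and every sufficiently large $t$. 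It is essential here that the constant in Theorem~\ref{thm-Harnack} is uniform over all admissible balls, so that the scaling argument produces a single $C_1$ independent of $t$.

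The monotone structure comes from the maximum principle. Using Theorem~\ref{thm-max} on the connected open annuli $\{a<|x|<c\}$ (connectedness again needs $n\ge2$), the maximum of $u$ over the closed annulus $\{a\le|x|\le c\}$ is attained on its two boundary spheres, whence $M(b)\le\max(M(a),M(c))$ for all $R<a<b<c$; that is, $M$ is quasiconvex, and symmetrically $m$ is quasiconcave. A bounded quasiconvex function on $[R,\infty)$ is nonincreasing and then nondecreasing, hence eventually monotone and therefore convergent; thus $L:=\lim_{t\to\infty}M(t)$ and $\ell:=\lim_{t\to\infty}m(t)$ exist and are finite, with $L\ge\ell$.

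Finally I would force $L=\ell$. For fixed large $r_0$ put $b_{r_0}=\inf_{|x|\ge r_0}u$; then $u-b_{r_0}\ge0$ is $Q$-quasiharmonic in $\{|x|>r_0\}$, and the spherical Harnack inequality gives $M(t)-b_{r_0}\le C_1\,(m(t)-b_{r_0})$ for all large $t$. Letting $t\to\infty$ yields $L-b_{r_0}\le C_1(\ell-b_{r_0})$, and then letting $r_0\to\infty$, so that $b_{r_0}\uparrow\ell$, gives $L\le\ell$; hence $L=\ell$, and the squeeze $m(t)\le u\le M(t)$ on each sphere shows $\lim_{x\to\infty}u(x)=L$. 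The main obstacle is that in the genuinely quasiharmonic setting there is no comparison principle and the difference of two quasiharmonic functions need not be quasiharmonic, so every use of Harnack must be applied to a function of the form (constant)$\,\pm u$, and the whole argument has to be built solely from the Harnack inequality and the strong maximum principle. The two delicate points are obtaining a Harnack constant on spheres that is uniform as $t\to\infty$, and recognising that the maximum principle forces the radial profiles $M$ and $m$ to be quasiconvex, respectively quasiconcave, and hence convergent; both rely on $n\ge2$, as the statement indeed fails on $\R$.
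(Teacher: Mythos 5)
Your proof is correct and takes essentially the same approach as the paper's: the strong maximum principle applied on annuli forces the radial extremal profiles to converge, and Harnack chains along spheres, with chain length depending only on $n$ and hence a constant uniform in $t$, collapse the remaining oscillation at infinity. The differences are purely organizational: the paper tracks only $m(r)=\min_{|x|=r}u$, normalizes its limit to $0$, and chains $u+\eps$ from an arbitrary point to the minimizing point on the same sphere, which is the same mechanism as your shift by $b_{r_0}$ combined with your spherical Harnack inequality (your radius $t/24$ is, if anything, the more careful choice for keeping the dilated balls $6B_i$ inside the exterior domain where positivity is known).
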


The idea for the proof is to use Harnack's inequality on spheres.
To make the proof a little more elementary we will only use the
Harnack inequality (Theorem~\ref{thm-Harnack}) above.

\begin{proof}
Without loss of generality we may assume that
$F=\itoverline{B(0,R)}$.
Let 
$m(r)=\min_{x \in \bdy B(0,r)} u(x)$, $r>R$.
As $u$ is continuous, so is $m$.
Moreover, by the strong maximum principle
(Theorem~\ref{thm-max}),
$m$ does not have any local minimum
(unless $u$ is constant).
In particular, 
 $M:=\lim_{r \to \infty} m(r)$ exists (also if $u$ is constant).
We may assume that $M=0$.
It follows that $\liminf_{x \to \infty} u =0$.

Let $\eps>0$ and find $\rho > R$ such that
$\sup_{r \ge \rho} |m(\rho)|<\eps$.
Assume that $|x|>2\rho$.
Find $z \in \bdy B(0,\tau)$ such that $u(z)=m(\tau)$, where $\tau=|x|$.
Then we can find a sequence of balls
$B_j=B\bigl(x_j,\tfrac{1}{6}\tau)$, $j=0,\ldots,N$,
where $x_0=x$, $x_N=z$ and $x_j \in B_{j-1} \cap \bdy B(0,\tau)$,
$j=1,\ldots,N$.
Moreover $N$ can be chosen to only depend on the dimension $n$.
By applying the Harnack inequality (Theorem~\ref{thm-Harnack})
$N$ times to $v=u+\eps$, we see that
\[
    v(x)=v(x_0) \le C v(x_1) \le \ldots \le C^N v(x_N) = C^N v(z) \le 2C^N\eps.
\]
Hence
\[
    \sup_{|x|>2\rho} u(x) <\sup_{|x|>2\rho} v(x) \le  2C^{N}\eps.
\]
Letting $\eps \to 0$ shows that $\limsup_{x \to \infty} u \le 0$.
\end{proof}

\begin{lem} \label{lem-x->x0}
  Let  $x_0 \in \Om$
  and let $u$ be a bounded  quasiharmonic function in $\Om \setm \{x_0\}$.
Then $\lim_{x \to x_0} u(x)$ exists.
\end{lem}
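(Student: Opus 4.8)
The plan is to mirror the proof of Lemma~\ref{lem-x->infty}, replacing the limit $r \to \infty$ by $r \to 0^+$ and running Harnack's inequality on \emph{shrinking} spheres around $x_0$. Without loss of generality assume $x_0=0$ and fix $R_0>0$ with $B(0,R_0)\subset\Om$; then $u$ is a bounded quasiharmonic function on the punctured ball $B(0,R_0)\setm\{0\}$. Set $m(r)=\min_{x\in\bdy B(0,r)}u(x)$ for $0<r<R_0$, which is continuous since $u$ is. Because quasiminimizers are preserved under $u\mapsto-u$, the function $u$ also obeys a minimum principle; applying it on the closed annuli $\{r_1\le|x|\le r_2\}$ gives $m(r)\ge\min\{m(r_1),m(r_2)\}$ for $r_1\le r\le r_2$, so $m$ has no interior local minimum. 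A continuous function on $(0,R_0)$ with no interior local minimum is monotone near $0$, so $M:=\lim_{r\to0^+}m(r)$ exists and is finite, $u$ being bounded. After subtracting a constant I may assume $M=0$; since $u(x)\ge m(|x|)\to0$ while $u$ equals $m(\tau)$ somewhere on each sphere $\bdy B(0,\tau)$, this gives $\liminf_{x\to0}u=0$.

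Next I would bound $\limsup_{x\to0}u$ from above by $0$ using Harnack's inequality. Given $\eps>0$, choose $\rho\in(0,R_0)$ with $|m(r)|<\eps$ for $0<r<\rho$. For $0<|x|<\tfrac23\rho$, put $\tau=|x|$ and pick $z\in\bdy B(0,\tau)$ with $u(z)=m(\tau)$. As in Lemma~\ref{lem-x->infty}, connect $x$ to $z$ by a chain of balls $B_j=B(x_j,\tau/12)$ with centres $x_j\in\bdy B(0,\tau)$, $x_0=x$, $x_N=z$; the dilated balls $6B_j=B(x_j,\tau/2)$ lie in the annulus $\{\tau/2<|y|<3\tau/2\}\subset B(0,R_0)\setm\{0\}$ (they avoid $0$ since $|x_j|=\tau>\tau/2$), and the number $N$ of balls depends only on $n$ by scaling. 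The minimum principle gives $u\ge\min\{m(\tau/2),m(3\tau/2)\}>-\eps$ on each $6B_j$, so $v:=u+\eps$ is positive there. Applying Theorem~\ref{thm-Harnack} to $v$ exactly $N$ times,
\[
  v(x)\le C^N v(z)=C^N\bigl(m(\tau)+\eps\bigr)\le 2C^N\eps,
\]
so $u(x)<2C^N\eps$ for all $0<|x|<\tfrac23\rho$. Letting $\eps\to0$ yields $\limsup_{x\to0}u\le0$. Combined with $\liminf_{x\to0}u=0$, this forces $\lim_{x\to0}u=0=M$, which is the assertion.

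I expect the main obstacle to be the same as at infinity: arranging the Harnack chain so that the inflated balls $6B_j$ simultaneously stay inside $\Om$, avoid the singular point $x_0$, and number only a dimensional constant $N$ independent of $\tau$, so that the Harnack constant can be iterated a bounded number of times all the way down to the puncture. Choosing the radius as a fixed small fraction of $\tau$ settles all three requirements by scale invariance. The one feature not present at infinity is that the balls must not engulf $x_0$; the choice $B(x_j,\tau/12)$, for which $6B_j$ has radius $\tau/2<\tau=|x_j|$, keeps them off the singularity while remaining within $B(0,R_0)$, and this is precisely the delicate point to get right.
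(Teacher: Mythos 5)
Your proposal is correct and is exactly what the paper intends: its own proof of Lemma~\ref{lem-x->x0} simply says it is ``essentially the same'' as that of Lemma~\ref{lem-x->infty}, and you carry out precisely that adaptation (minimum principle on annuli to get $\lim_{r\to0^+}m(r)$, then a scale-invariant Harnack chain on spheres $\bdy B(x_0,\tau)$). Your choice of radius $\tau/12$, so that the inflated balls $6B_j$ stay inside the annulus $\{\tau/2<|y|<3\tau/2\}$ where $u+\eps>0$, is in fact slightly more careful than the paper's own chain in Lemma~\ref{lem-x->infty}.
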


\begin{proof}
The proof is essentially the same as the proof
of Lemma~\ref{lem-x->infty}.
\end{proof}

\begin{prop} \label{prop-char-reg}
  Let $x_0 \in \R^n$ be such that $\Cpmu(\{x_0\})>0$.
Then the following are equivalent\/\textup{:}
\begin{enumerate}
\item \label{i-p-hyp}
$(\R^n,\mu)$ is \p-hyperbolic\textup{;} 
\item \label{i-p-x0}
$\infty$ is regular with respect to $\R^n \setm \{x_0\}$\textup{;}
\item \label{i-p-pharm}
there is a bounded nonconstant \A-harmonic
function in $\R^n \setm \{x_0\}$\textup{;}
\item \label{i-p-qharm}
there is a bounded nonconstant quasiharmonic function 
in $\R^n \setm \{x_0\}$\textup{;}
\end{enumerate}
\end{prop}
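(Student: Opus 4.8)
The plan is to establish the cycle \ref{i-p-hyp}\,$\Rightarrow$\,\ref{i-p-pharm}\,$\Rightarrow$\,\ref{i-p-qharm}\,$\Rightarrow$\,\ref{i-p-hyp} together with \ref{i-p-hyp}\,$\Leftrightarrow$\,\ref{i-p-x0}. The hypothesis $\Cpmu(\{x_0\})>0$ is what makes the equivalence nontrivial: it guarantees that $x_0$ is a regular boundary point of $\R^n\setm\{x_0\}$ (by the Wiener criterion a point of positive capacity contributes a fixed positive amount at every scale), so that it cannot simply be removed. Indeed, if instead $\Cpmu(\{x_0\})=0$, then $x_0$ would be removable by Theorem~\ref{thm-removability-qharm} and every bounded solution on $\R^n\setm\{x_0\}$ would extend to $\R^n$ and be constant by the Liouville theorem (Theorem~\ref{thm-Liouville}); thus \ref{i-p-pharm} and \ref{i-p-qharm} would fail no matter what \ref{i-p-hyp} said. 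For \ref{i-p-hyp}\,$\Leftrightarrow$\,\ref{i-p-x0} I would first note that regularity of $\infty$ is a local matter at $\infty$: a barrier lives in a neighbourhood $\{|x|>M\}$ of $\infty$ shared by $\R^n\setm\{x_0\}$ and the exterior domain $\R^n\setm\overline{B}$, so $\infty$ is regular for the one exactly when it is regular for the other. The equivalence of the latter with \p-hyperbolicity is the standard characterization in \cite[Theorem~9.22]{HeKiMa}.

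For \ref{i-p-hyp}\,$\Rightarrow$\,\ref{i-p-pharm} I would use that hyperbolicity gives, via \ref{i-p-x0}, that $\infty$ is regular, while $\Cpmu(\{x_0\})>0$ gives that $x_0$ is regular. The boundary of $\R^n\setm\{x_0\}$ in the one-point compactification is the two points $\{x_0,\infty\}$, so I can solve the Dirichlet problem for the (trivially continuous) data $1$ at $x_0$ and $0$ at $\infty$. The Perron solution $u$ is \A-harmonic with $0\le u\le1$, and by regularity of the two boundary points $\lim_{x\to x_0}u=1$ and $\lim_{x\to\infty}u=0$; hence $u$ is bounded, \A-harmonic and nonconstant on $\R^n\setm\{x_0\}$. (Alternatively one obtains $u$ as a locally uniform limit of the \A-harmonic functions on the annuli $B(x_0,j)\setm\overline{B(x_0,1/j)}$ with inner value $1$ and outer value $0$, using Harnack's inequality (Theorem~\ref{thm-Harnack}) for compactness, comparison with a fixed capacitary potential near $x_0$ for a positive lower bound there, and hyperbolicity for the decay at $\infty$.) The implication \ref{i-p-pharm}\,$\Rightarrow$\,\ref{i-p-qharm} is immediate, since every \A-harmonic function is $Q$-quasiharmonic.

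The crux is \ref{i-p-qharm}\,$\Rightarrow$\,\ref{i-p-hyp}, which I would prove contrapositively: if $(\R^n,\mu)$ is \p-parabolic, then every bounded quasiharmonic $u$ on $\R^n\setm\{x_0\}$ is constant. By Lemmas~\ref{lem-x->x0} and~\ref{lem-x->infty} the limits $L_0=\lim_{x\to x_0}u$ and $L_\infty=\lim_{x\to\infty}u$ exist, and the strong maximum principle (Theorem~\ref{thm-max}), applied on the closed annuli $\overline{B(x_0,R)}\setm B(x_0,\rho)$, squeezes $u$ between $L_\infty$ and $L_0$ with the two extreme values approached only at $x_0$ and at $\infty$. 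If $L_0=L_\infty$ this already forces $u$ to be constant, so the task is to exclude $L_0\ne L_\infty$; normalize so that $L_0=1$, $L_\infty=0$ and $0\le u\le1$. A Caccioppoli estimate gives finite energy of $u$ near $x_0$, so the extension $u(x_0):=L_0$ lies in $\Hploc(\R^n)$. The key point --- this is exactly where $\Cpmu(\{x_0\})>0$ makes itself felt --- is that across a point of positive capacity this extension is in general \emph{not} quasiharmonic but only quasisub- or quasisuperharmonic, as the fundamental solution $|x-x_0|^{(p-n)/(p-1)}$ already shows in the unweighted case $p>n$. Since $L_0$ is the maximal value of $u$, the extension has a maximum at $x_0$, so after negating it is a bounded quasisuperharmonic function on all of $\R^n$; a Liouville theorem for bounded quasisuperharmonic functions on \p-parabolic $\R^n$ then forces it to be constant, contradicting $L_0\ne L_\infty$.

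I expect the real work to lie entirely in this last step. First, one must justify that the one-point extension is genuinely quasisuperharmonic on $\R^n$, a one-sided removability statement for quasisuperminimizers that has to be argued directly precisely because the two-sided (quasiharmonic) version fails at positive-capacity points. Second, one needs the parabolic Liouville theorem for bounded quasisuperharmonic functions --- the superharmonic counterpart of Theorem~\ref{thm-Liouville} --- which encodes the recurrence of the \p-parabolic space. I would stress that there is no shortcut through exterior domains: restricting $u$ to $\R^n\setm\overline{B}$ loses the information carried by the puncture, and such exterior domains support nonconstant bounded (quasi)harmonic functions whether or not the space is parabolic, so the single-point nature of the singularity is indispensable.
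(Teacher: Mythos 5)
Your cycle \ref{i-p-hyp}$\Rightarrow$\ref{i-p-pharm}$\Rightarrow$\ref{i-p-qharm} and the treatment of \ref{i-p-x0} match the paper (the paper proves \ref{i-p-hyp}$\Rightarrow$\ref{i-p-x0} via \cite[Theorem~9.22]{HeKiMa} and then \ref{i-p-x0}$\Rightarrow$\ref{i-p-pharm} by the same Perron-solution construction you describe). The genuine gap is in \ref{i-p-qharm}$\Rightarrow$\ref{i-p-hyp}, which you correctly identify as the crux but do not actually prove: you reduce it to (i) the claim that extending $u$ across $x_0$ by its limit value (its maximum) yields a quasisuperharmonic function on all of $\R^n$, and (ii) a Liouville theorem for bounded quasisuperharmonic functions on \p-parabolic $(\R^n,\mu)$. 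Neither claim is established, neither is among the paper's tools or cited results, and (i) is genuinely delicate precisely because $\Cpmu(\{x_0\})>0$: your proposed Caccioppoli estimate does not give finite energy near $x_0$ in any straightforward way, since the cutoff term it produces is of size $\mu(B(x_0,\eps))\eps^{-p}\sup|u-L_0|^p$, and positive capacity of the point forces $\mu(B(x_0,\eps))\eps^{-p}$ to stay bounded away from zero (indeed to blow up, else the Wiener-type integral would diverge and the capacity would vanish). So one would need a quantitative oscillation decay of $u$ at $x_0$ to compensate, which you do not supply; and even granted finite energy, the quasisuperminimizing inequality for test functions whose support contains $x_0$ still requires an argument. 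There is also a sign slip: an extension with a maximum at $x_0$ should itself be the quasisuperharmonic candidate (compare $1-|x|^{(p-n)/(p-1)}$ for $p>n$ unweighted); negating it produces the quasisubharmonic one.

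The paper avoids both of your unproven claims by a direct energy--capacity argument that never extends $u$ across $x_0$. Assuming \p-parabolicity, normalize $u(x_0)=1$, $u(\infty)=0$ (limits via Lemmas~\ref{lem-x->infty} and~\ref{lem-x->x0}, as you do). For small $\eps$, choose $r$ with $M:=\max_{|x-x_0|=r}u>1-\eps$ and, using $\cpmu(\itoverline{B(x_0,r)},\R^n)=0$, choose $R$ with $\cpmu(\itoverline{B(x_0,r)},B(x_0,R))<\eps$ and $m:=\min_{|x-x_0|=R}u<\eps$. With $K=\{u\ge M\}$ and $F=\{u\le m\}$ (compact resp.\ closed by the strong maximum principle), one tests the quasiminimizer property of $u$ on $\R^n\setm(K\cup F)$ against the rescaled capacitary potential of $K$, obtaining
\begin{equation*}
\int_{\R^n\setm(K\cup F)}|\nabla u|^p\,d\mu
\le Q\,\cpmu\bigl(\itoverline{B(x_0,r)},B(x_0,R)\bigr)<Q\eps .
\end{equation*}
Letting $\eps\to0$ gives $\nabla u\equiv0$, contradicting nonconstancy. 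This is the step your proposal is missing: the parabolicity is exploited through the vanishing of the test energy, not through a superharmonic Liouville theorem, so no one-sided removability at a positive-capacity point is ever needed.
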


There are further characterizations in Theorem~9.22
in Heinonen--Kilpel\"ainen--Martio~\cite{HeKiMa};
see also \eqref{eq-int-parab=infty}.

In the proof we will use \A-harmonic Perron solutions $\Hpind{\Om} f$ and boundary
regularity, see Chapter~9 in~\cite{HeKiMa}.
When we discuss boundary regularity in this paper it is always with respect
to \A-harmonic functions.
(In this context, the dependence on $p$ and $\mu$ will be implicit.)
In particular, we will use the Kellogg property which says
that $\Cpmu(I)=0$, where $I$ is the set of finite irregular boundary points
of an open set $\Om$, see Theorem~9.11 in~\cite{HeKiMa}.

\begin{proof}
\ref{i-p-hyp} $\imp$ \ref{i-p-x0}
This follows from Theorem~9.22 in~\cite{HeKiMa}.

\ref{i-p-x0} $\imp$ \ref{i-p-pharm} 
Let $f(x_0)=1$ and $f(\infty)=0$.
Then
$u=\Hpind{\R^n \setm \{x_0\}} f$ is \A-harmonic.
As $x_0$ is regular, by the Kellogg property,
we have $\lim_{y \to x_0} u(y)=1$.
Moreover, as $\infty$ is regular, by assumption, $\lim_{x \to \infty} u(x) =0$.
Thus 
$u$ is a bounded nonconstant \A-harmonic
function in $\R^n \setm \{x_0\}$.

\ref{i-p-pharm} $\imp$ \ref{i-p-qharm}
This is trivial.

\ref{i-p-qharm} $\imp$ \ref{i-p-hyp}
Let $u$ be a bounded nonconstant $Q$-quasiharmonic function 
in $\R^n \setm \{x_0\}$.
By Lemmas~\ref{lem-x->infty} and~\ref{lem-x->x0},
both the limits $u(x_0):=\lim_{x \to x_0} u(x)$
and $u(\infty):=\lim_{x \to \infty} u(x)$ exist.
As $u$ is nonconstant, the strong maximum principle
(Theorem~\ref{thm-max})
shows
that $u(x_0) \ne u(\infty)$.
We may assume that $u(\infty)=0$ and $u(x_0)=1$.
Let $0 < \eps < \tfrac14$.
Then there is $0 < r<\eps $ so that
$M:=\max_{|x-x_0|=r} u(x) > 1-\eps$.

Assume next that $(\R^n,\mu)$ is \p-parabolic.
Then $\cpmu(\itoverline{B(x_0,r)},\R^n)=0$.
As the admissible functions testing the capacity
have compact support,  there is $R>1/\eps$ such that
$\cpmu(\itoverline{B(x_0,r)},B(x_0,R))< \eps$
and $m:=\min_{|x-x_0|=R} u(x) < \eps$.
Let $K =\{x:u(x) \ge M\}$ and $F=\{x \in \R^n:u(x) \le m\}$.
Then $K \subset \itoverline{B(x_0,r)}$ is compact and
$F \subset \R^n \setm B(x_0,R)$
is closed, by the continuity of $u$ and the strong
maximum principle (Theorem~\ref{thm-max}).

Let $v$ be the \p-harmonic function in
the bounded open set $\Om=\R^n \setm (K \cup F)$ with boundary
values $1$ on $K \cap \bdy \Om$ and $0$ on $F \cap \bdy \Om$
(in Sobolev or equivalently Perron sense, see
\cite[Corollary~9.29]{HeKiMa}).
Also let $\vt=(M-m)v+m$.
Then $\vt-u \in \Hp_0(\Om,\mu)$ and thus
\begin{align*}
\int_{\Om} |\nabla u|^p \, d\mu
& \le Q \int_{\Om} |\nabla \vt|^p \, d\mu
\le    Q\int_{\Om} |\nabla v|^p \, d\mu \\
&=   Q \cpmu(K,\R^n \setm F)
\le  Q \cpmu (\itoverline{B(x_0,r)},B(x_0,R))
<  Q\eps.
\end{align*}
Letting $\eps \to \infty$ shows that
\[
   \int_{\R^n \setm \{x_0\}} |\nabla u|^p \, d\mu =0,
\]
but this is impossible as $u$ is nonconstant.
Hence $(\R^n,\mu)$ must be \p-hyperbolic.
\end{proof}

\begin{lem} \label{lem-intE-nonempty}
If\/ $\interior E \ne \emptyset$, then $E$ is not
removable for bounded \A-harmonic functions in $\Om \setm E$,
nor for bounded quasiharmonic functions in $\Om \setm E$.
\end{lem}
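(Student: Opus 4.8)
The plan is to disprove removability by producing a single bounded \A-harmonic function on $G:=\Om\setm E$ that has no \emph{quasiharmonic} extension to $\Om$. Since every \A-harmonic function is quasiharmonic, the absence of a quasiharmonic extension rules out both an \A-harmonic and a quasiharmonic extension simultaneously, so the two assertions of the lemma follow at once. I may assume $\Om$ is connected, arguing in one component that is not contained in $E$. As $\interior E$ is a nonempty open proper subset of the connected set $\Om$ (proper since no component of $\Om$ lies in $E$), its boundary meets $\Om$, and a short point-set argument shows that there is a point $z_0\in\bdy(\interior E)\cap\Om$ that also lies in $\itoverline{G}$: otherwise a neighbourhood of some boundary point of $\interior E$ would lie in $E$, placing that point in $\interior E$. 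Since $E$ is relatively closed, $z_0\in E$, so $z_0\in\bdy G$, and arbitrarily close to $z_0$ there are both points of $\interior E$ and points of $G$.

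First I would construct the candidate function as an \A-harmonic Perron solution on $G$. Fix a small $\eta>0$ with $\itoverline{B(z_0,\eta)}\subset\Om$ and choose $f\in C(\bdy G)$ with $0\le f\le1$, $f\equiv1$ on $\bdy G\cap B(z_0,\eta)$ and $f\not\equiv1$, and set $u:=\Hpind{G}f$. Then $u$ is bounded and \A-harmonic on $G$ with $0\le u\le1$, and it is nonconstant: by the strong maximum principle (Theorem~\ref{thm-max}) a nonconstant boundary datum forces $u<1$ somewhere. Because $\interior E$ furnishes a thick portion of the complement of $G$ at $z_0$, the point $z_0$ is a regular boundary point for $G$ (the relevant boundary regularity and the Kellogg property are recalled before Proposition~\ref{prop-char-reg}), so $\lim_{G\ni x\to z_0}u(x)=f(z_0)=1=\sup_G u$.

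Now suppose, towards a contradiction, that $u$ has a quasiharmonic extension $U$ to $\Om$. Continuity of $U$ together with $z_0\in\itoverline G$ gives $U(z_0)=1$, while $U=u\le1$ on $G$ and hence $U\le1$ on $\itoverline G$. The main step, and the main obstacle, is to promote this into a forbidden \emph{interior} maximum: I must show that $U\le1$ also holds on $\interior E$ in a neighbourhood of $z_0$, for then $U$ attains its maximum value $1$ at the interior point $z_0\in\Om$ and Theorem~\ref{thm-max} forces $U$ to be constant near $z_0$, hence constant on $G$, contradicting that $u$ is nonconstant. Controlling $U$ on $\interior E$ is exactly where the geometry of $E$ enters: on a component $V$ of $\interior E$ one has $\bdy V\cap\Om\subset\itoverline G$ (a component of the open set $\interior E$ cannot have boundary inside $\interior E$), so $U\le1$ on $\bdy V\cap\Om$; when $V$ is bounded with $\itoverline V\subset\Om$ this yields $U\le1$ on $V$ by comparison, and the remaining case, in which $V$ is unbounded or reaches $\bdy\Om$ so that $U$ is a bounded quasiharmonic function on an unbounded piece (ultimately on $\R^n$), is handled by the Liouville theorem (Theorem~\ref{thm-Liouville}). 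Ruling out that the extension slips above the value it inherits from $\itoverline G$ across the insulating layer $E$ is the crux; once that is secured, the strong maximum principle closes the argument, and since $U$ was only assumed quasiharmonic, both statements of the lemma are proved.
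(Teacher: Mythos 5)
Your overall strategy---build a bounded \A-harmonic $u$ on $G=\Om\setm E$ whose supremum $1$ is attained as a limit at an interior boundary point $z_0$, then use the strong maximum principle to kill any quasiharmonic extension---runs into a genuine gap at exactly the step you yourself call the crux, and the tools you invoke do not close it. To conclude that an extension $U$ attains an interior maximum at $z_0$ you must prove $U\le 1$ on $\interior E$ near $z_0$. Your comparison argument covers only components $V$ of $\interior E$ that are bounded with $\itoverline{V}\subset\Om$ (that case is indeed fine: the weak maximum principle for a single quasiharmonic function follows from Theorem~\ref{thm-max} and continuity on $\itoverline{V}$). For the remaining components the appeal to the Liouville theorem is invalid: Theorem~\ref{thm-Liouville} applies only to quasiharmonic functions on all of $\R^n$, and an unbounded component $V$ of $\interior E$ is never all of $\R^n$, since $G\ne\emptyset$. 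For such $V$ there is no substitute available in the paper: quasiminimizers notoriously fail the comparison principle, no Phragm\'en--Lindel\"of-type principle for quasiharmonic functions is at hand, and if $\itoverline{V}$ meets $\bdy\Om$ then $U$ is not even defined on part of $\bdy V$, so its values there are completely uncontrolled. Note that the most basic instances of the lemma land precisely in the unhandled case---for example $\Om=\R^n$ and $E$ a closed half-space, where $\interior E$ is a single unbounded component---so as written your argument does not prove the lemma even for these examples.

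There is a second, independent gap: you assert that $z_0$ is a regular boundary point of $G$ because ``$\interior E$ furnishes a thick portion of the complement of $G$ at $z_0$''. Membership in $\bdy(\interior E)$ only gives points of $\interior E$ arbitrarily close to $z_0$; it gives no Wiener-type capacity density of the complement of $G$ \emph{at} $z_0$, and the claim is false in general: on unweighted $\R^n$ with $p\le n$ the tip of a sufficiently sharp Lebesgue spine is irregular even though the complement has nonempty interior whose closure contains the tip. The Kellogg property cannot rescue a \emph{prescribed} point; it only says that the irregular boundary points form a set of capacity zero, so one must first show that $\bdy G$ has positive capacity inside $B(z_0,\eta)$ and then \emph{select} a regular point there. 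This is exactly what the paper does, using the fact that sets of capacity zero cannot separate open connected sets (\cite[Lemma~4.6]{BBbook}) together with the Kellogg property. More importantly, the paper's proof then proceeds in a way that sidesteps your crux entirely: it produces infinitely many regular points $y_j\in\bdy G$ accumulating at a point $x_0$, and takes an upper Perron solution whose boundary data alternate between $0$ and $1$ along the $y_j$, so that $\lim_{G\ni y\to x_0}u(y)$ does not exist. Such a $u$ admits no \emph{continuous} extension to $\Om$ whatsoever---bounded or unbounded, \A-harmonic or quasiharmonic---so no control of a hypothetical extension on $E$ is ever needed.
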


This is a special case of Theorem~\ref{thm-main-Rn}, but it will convenient
to have it taken care of when we turn to the proof of Theorem~\ref{thm-main-Rn}
below.

In the first paragraph of the proof of Lemma~\ref{lem-intE-nonempty} below,
we will use a fact about separating sets in $\R^n$,
$n \ge 2$. In order to avoid sending the reader to ``topological'' papers we will
provide a potential-theoretic argument sufficient for our purpose.
A similar argument plays a role also in the second paragraph of the proof,
but then yielding a potential-theoretic consequence needed to conclude the proof.

\begin{proof}[Proof of Lemma~\ref{lem-intE-nonempty}]
Let $\Omt$ be a component of $\Om$ such that
$E':=\Omt \cap \interior E  \ne \emptyset$.
Then $E'':= \Omt \cap \bdy E'$ separates $\Omt$ into the two nonempty
disjoint open sets $E'$ and $\Omt \setm (E' \cup E'')$.
(The latter set is nonempty as 
$\Omt \setm (E' \cup E'') \supset \Omt \setm E \ne \emptyset$, by assumption.)
Since sets of $n$-capacity zero do not separate 
open connected sets in $\R^n$ (see \cite[Lemma~4.6]{BBbook}),
we must have $\Cn(E'')>0$.
Next, as singleton sets have $\Cn$-capacity zero, 
the set $E''$ must be uncountable, and since
it is relatively closed it must contain a limit point.
(The relatively closedness is not needed, but it is perhaps
easier to see this in this case.)
Hence there are $x_j\in E''$, $j=0,1,\ldots$\,,
such that
$|x_{j+1}-x_0| < d_j:=\frac{1}{4} |x_j-x_0|$,
$j=1,2,\ldots$\,.

Then the balls $B_j=B(x_j,d_j)$ are pairwise disjoint,
and $B_j$ contains points both in $G:=\Om \setm E$ and in
$\interior E \subset X \setm \clG$.
Let $F_j=B_j \setm G \subsetneq B_j$, which is a relatively closed
subset of the open set $B_j$.
Now $B_j \cap \bdy F_j$ separates $B_j$ into the two
nonempty open sets $B_j \cap G$ and $B_j \setm \clG$.
Since again sets of capacity zero cannot separate open connected sets,
we see that $\Cpmu(B_j \cap \bdy F_j)>0$.
As $B_j \cap \bdy F_j \subset \bdy G$, the Kellogg property shows 
that there is a point $y_j \in B_j \cap \bdy F_j$
that is regular with respect to $G$.
Let $f_j(x)=(1-|x-y_j|/d_j)_\limplus$.
Then $f_j$ have pairwise disjoint support.
Also let $f=\sum_{k=1}^\infty f_{2k}$ and $u=\uHpind{G} f$
(an \A-harmonic upper Perron solution).
By Lemma~9.6 in~\cite{HeKiMa},
\[
     \lim_{G \ni y \to y_j} u(y)=
	\begin{cases}
	1, & \text{if $j$ is even}, \\
	0, & \text{if $j$ is odd}.
	\end{cases}
\]
It follows that $\lim_{G \ni y \to x_0} u$ does not exist,
and thus $u$ does not have any continuous extension to $\Om$,
let alone any \A-harmonic or quasiharmonic extension.
\end{proof}

We are now ready to prove Theorem~\ref{thm-main-Rn}.

\begin{proof}[Proof of Theorem~\ref{thm-main-Rn}]
Assume first that \ref{r-b} holds.
If $\Cpmu(E)=0$, then both types of
removability follow directly from 
Theorem~\ref{thm-removability-qharm} (with the quasiharmonicity
constant $Q$ preserved).
Assume therefore that there is $x_0 \in E$ such that $\Cpmu(\{x_0\})>0$,
$\Cpmu(E \setm \{x_0\})=\Cpmu(\R^n \setm \Om)=0$
and $(\R^n,\mu)$ is \p-parabolic.

Let $u$ be a bounded \A-harmonic or bounded
quasiharmonic function on $\Om \setm E$.
Since $\Cpmu((E \setm \{x_0\})\cup (\R^n \setm \Om))=0$ and $\R^n \setm \{x_0\}$
is open, it follows from Theorem~\ref{thm-removability-qharm}
that $(E \setm \{x_0\})\cup (\R^n \setm \Om)$ is removable,
and hence that $u$ has a bounded \A-harmonic or
bounded quasiharmonic extension $U$ to $\R^n \setm \{x_0\}$.
As $(\R^n,\mu)$ is \p-parabolic, it follows from
Proposition~\ref{prop-char-reg} that $U$ is constant.
Thus also $u$ is constant and trivially has a bounded \A-harmonic (and quasiharmonic)
extension to $\Om$, i.e.\ $E$ is removable in both senses.

Conversely, assume that \ref{r-b} fails.
If $\interior E \ne \emptyset$,
then \ref{r-pharm} and \ref{r-qharm}
fail, by Lemma~\ref{lem-intE-nonempty},
so in the rest of the proof we can assume that $\interior E=\emptyset$.

By assumption $\Cpmu(E)>0$,
and thus there is a component $\Omt$ of $\Om$
such that $\Cpmu(\Omt \cap E)>0$.
If $\Gt:=\Omt \setm E$ is disconnected,
then $\chi_V$, where $V$ is a component of $\Gt$, is an \A-harmonic
function on $G$ with no continuous extension to $\Om$, let alone
any quasiharmonic extension, i.e.\ \ref{r-pharm} and \ref{r-qharm}
fail.
We can therefore assume that $E$ does not separate $\Omt$.

By the Kellogg property,
there is a point $x_1 \in \Omt \cap E$ that is regular with respect
$G:=\Om \setm E$.
It follows that $x_1 \in \bdy \Gt$.
We now consider three mutually disjoint cases.

\medskip

\emph{Case}~1.
\emph{$\Cpmu(\R^n \setm \Om)>0$.}

As $\interior E=  \emptyset$, we see that $\bdy \Om \subset \bdy G$.
And as sets of capacity zero cannot separate open sets
(see \cite[Lemma~4.6]{BBbook}),
we see that
$\Cpmu(\bdy \Gt \cap \bdy \Om) > 0$.
Hence, it follows from 
 the Kellogg property that there is another 
point $x_2 \in \bdy \Gt \cap \bdy \Om$ that
is regular with respect to $G$.
Let $f(x)=(1-|x-x_2|/|x_1-x_2|)_\limplus$ (with $f(\infty)=0$).
Then $u=\Hpind{G} f$ is an \A-harmonic function in $G$.
As $x_1$ and $x_2$ are regular, $u|_{\Gt}$ is nonconstant.

Assume that $u$ has a quasiharmonic extension $U$ to $\Om$.
By continuity and regularity (recall that $\interior E = \emptyset$)
we find that $U \ge 0$ and that $U(x_1)=0$.
As $\Omt$ is connected this contradicts the strong maximum
principle (Theorem~\ref{thm-max}),
showing
that $u$ cannot have an \A-harmonic or quasiharmonic extension $U$ to $\Om$,
i.e.\ both \ref{r-pharm} and \ref{r-qharm} fail.

\medskip

\emph{Case}~2.
\emph{$\Cpmu(\R^n \setm \Om)=0$ and
    $\Cpmu(E \setm \{x_1\})>0$.}

As sets of capacity zero cannot separate open sets,
$\Om$ must be connected and thus $\Gt=G$ and $E \subset \bdy G$.
Hence, by
 the Kellogg property there is another 
point $x_2 \in E \setm \{x_1\}$ that
is regular with respect to $G$.
We can now proceed exactly as in case~1.

\medskip

\emph{Case}~3.
\emph{$\Cpmu(E \setm \{x_1\})= \Cpmu(\R^n \setm \Om)=0$
  and $(\R^n,\mu)$ is \p-hyperbolic.}

By Proposition~\ref{prop-char-reg},
there is a nonconstant
bounded \A-harmonic function $\ut$ in $\R^n \setm \{x_1\}$.
As $\interior E=\emptyset$, it follows that
$u:=\ut|_{G}$ is a nonconstant bounded \A-harmonic function in $G$.
Assume that $u$ has
a bounded
\A-harmonic or bounded quasiharmonic
extension $U$ to $\Om$.
Since $\Cpmu(\R^n \setm \Om)=0$, it follows from
Theorem~\ref{thm-removability-qharm} that there
is a further quasiharmonic extension $\Ut$ to $\R^n$,
but this contradicts the Liouville theorem
(Theorem~\ref{thm-Liouville}).
\end{proof}

\begin{proof}[Proof of Theorem~\ref{thm-main-intro-unweighted}]
If $p \le n$, then all singletons have zero capacity,
and thus \ref{r-b} in Theorem~\ref{thm-main-Rn}
is equivalent to $\Cp(E)=0$.

If on the other hand $p>n$ then every singleton has positive
capacity and $\R^n$ is \p-parabolic,
and thus \ref{r-b} in Theorem~\ref{thm-main-Rn}
is equivalent to ``\ref{a-a} or \ref{a-b}'' in Theorem~\ref{thm-main-intro-unweighted}.

In both cases, the characterization now follows
directly from Theorem~\ref{thm-main-Rn}.
\end{proof}

The condition $\Cpmu(E \setm \{x_0\})=0$ in
Theorem~\ref{thm-main-Rn} can be rephrased
as saying that $\Cpmu$ restricted to $E$ is concentrated at (at most) one point.
This condition was characterized in Lemma~10.6 in
Bj\"orn--Bj\"orn--Shanmugalingam~\cite{BBSunifPI}.

\section{The one-dimensional real line \texorpdfstring{$\R$}{R};
Theorem~\ref{thm-R}}
\label{sect-R}

\emph{In addition to the general assumptions from the beginning
  of Section~\ref{sect-prelim}, throughout this section we assume that $n=1$.}

\medskip

As pointed out in
\cite[Example~9.1]{ABremove},
already on (unweighted) $\R$ the situation is very different
from $\R^n$, $n \ge2$.
In this case \p-harmonic functions are nothing but affine functions
$x \mapsto \alp x+\be$ 
(independent of $p$).
It is easy to see
that $E=[a,\infty)$ is removable for bounded \p-harmonic functions in $\R \setm E$,
for every $a \in \R$.
Moreover, if $-\infty < a < b<\infty$, then
every bounded \p-harmonic function $u$ on $(a,b)$
has a \p-harmonic extension to $\R$, although it is bounded only if $u$ is constant.

Let us now consider the weighted real line $(\R,\mu)$,
where as before 
$d\mu=w\,dx$ and $w$ is a \p-admissible weight,
It follows from
Bj\"orn--Buckley--Keith~\cite[Theorem~2]{BjBuKe}
that $w$ is a Muckenhoupt \emph{$A_p$-weight}, i.e.\ that
\begin{equation} \label{eq-Ap}
\sup_I \frac{1}{|I|} \int_I w\,dx
   \biggl(\frac{1}{|I|} \int_I w^{1/(1-p)}\,dx\biggr)^{p-1} < \infty,
\end{equation}
where the supremum is over all bounded open intervals $I$ and
$|\cdot|$ denotes the Lebesgue measure.
(We assume, by definition, that intervals are nonempty.)

An \A-harmonic function $u$ is in this case nothing but a
continuous weak
solution of
\[
    \frac{d}{dx} A(x)w(x) |u(x)|^{p-2}u(x) =0,
\]
where $A$ is a positive measurable function
a.e.-bounded away from $0$ and $\infty$.
Hence $\wt=A w$ is a.e.-comparable to $w$ and
is thus also an $A_p$-weight.
Moreover, $u$ is \A-harmonic with respect to $(\R,w)$ if and only if
it is \p-harmonic with respect to $(\R,\wt)$.

We can now give a complete characterization of removability for
\A-harmonic functions on $\R$.
We start with 
the following complete characterization
of weak removability, where by \emph{weakly removable}
we mean that the extensions are not required to be bounded.
Note that components of open subset of $\R$ are intervals.

\begin{prop} \label{prop-R-weak}
  The set $E$ is weakly removable for bounded \A-harmonic functions
  in $\Om \setm E$, with respect to $(\R,w)$,
  if and only if $I \setm E$ is connected for every component $I$ of $\Om$.

Moreover, in this case the extensions are unique.
\end{prop}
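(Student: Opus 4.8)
The plan is to run everything through the one-dimensional first integral. As recalled just above, $u$ is \A-harmonic with respect to $(\R,w)$ if and only if it is \p-harmonic with respect to $(\R,\wt)$ with $\wt=Aw$, and this means precisely that $\wt|u'|^{p-2}u'$ is weakly equal to a constant $c$ (the ``flux'') on every interval on which $u$ is defined. Hence on any interval an \A-harmonic function is monotone, with $u'=\sigma|c/\wt|^{1/(p-1)}$ a.e.\ where $\sigma=\operatorname{sgn} c$; in particular $u$ is constant exactly when $c=0$. Two consequences will be used repeatedly: the flux $c$ is the same throughout any connected set on which $u$ is \A-harmonic, and since $\wt$ is an $A_p$-weight its dual weight $\wt^{1/(1-p)}$ is locally integrable, so $\int_K|u'|\,dx<\infty$ for every compact $K$.

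For the necessity direction I would argue by contraposition. Suppose $I\setm E$ is disconnected for some component $I$ of $\Om$; since $E$ is relatively closed and $I$ is an open interval, $I\setm E$ is open, hence a disjoint union of at least two open intervals. Pick two of them, $J_1$ to the left of $J_2$, and set $u=1$ on $J_1$ and $u=0$ on every other component of $\Om\setm E$; this is a bounded \A-harmonic function. If it had an \A-harmonic extension $U$ to $\Om$, then $U$ would be \A-harmonic on the connected interval $I$, and on $J_1\subset I$ it is the constant $1$, forcing its flux to vanish on $J_1$ and therefore on all of $I$; then $U$ is constant on $I$, contradicting $U\equiv1$ on $J_1$ and $U\equiv0$ on $J_2$. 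I would stress that this single argument covers every disconnection, including caps such as $\Om=\R$, $E=[0,1]$, where no point of $E$ is a two-sided limit of $\R\setm E$: it is the vanishing-flux mechanism, not a jump discontinuity, that does the work.

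For sufficiency and uniqueness, assume $I\setm E$ is connected for every component $I$ of $\Om$ and treat each component $I=(a,b)$ separately. By the standing assumption that no component of $\Om$ lies in $E$, the set $J:=I\setm E$ is a nonempty open interval $(\alpha,\beta)$ with $a\le\alpha<\beta\le b$, and $E\cap I$ is contained in the two caps $(a,\alpha]$ and $[\beta,b)$. Given a bounded \A-harmonic $u$ on $\Om\setm E$ with flux $c$ on $J$, I would define the extension on $I$ by \emph{keeping the same flux}, i.e.\ by integrating $U'=\sigma|c/\wt|^{1/(p-1)}$ from a base point of $J$ and matching the value of $u$ there. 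Because $\wt^{1/(1-p)}$ is locally integrable, this integral is finite over every bounded subinterval, so $U$ is a finite-valued continuous \A-harmonic function on all of $I$ (possibly unbounded, which is permitted for weak removability), and it agrees with $u$ on $J$ since both solve the same first-order equation with the same flux and coincide at a point. Gluing over the components yields the extension to $\Om$. Uniqueness runs identically: any two extensions agree with $u$ on the nonempty open interval $J$, hence share the flux there, hence on all of $I$, and coinciding at a point of $J$ they coincide on $I$.

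The only genuinely delicate points are bookkeeping: checking that the constant-flux integral produces a value that is finite at every point of the caps (exactly where local integrability of $\wt^{1/(1-p)}$, guaranteed by the $A_p$ condition, is needed), and that $U\in\Hploc$, for which the same integrability makes the local integrals of $|U'|^p$ against $d\mu$ finite on compact sets. I expect these to be the main, though routine, obstacles; the structural heart of the statement is the flux constant and its constancy on connected sets.
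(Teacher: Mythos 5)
Your proof is correct and takes essentially the same route as the paper: your constant-flux representation $u'=\sigma|c/\wt|^{1/(p-1)}$ is exactly the integrated form of the formula $u(x)=b+a\int_0^x \wt^{1/(1-p)}\,dt$ that the paper cites from Bj\"orn--Bj\"orn--Shanmugalingam and uses both to extend across $E\cap I$ in the connected case and to show $\chi_V$ has no extension in the disconnected case. The only differences are cosmetic: you re-derive the representation from the first integral instead of citing it, and you spell out the $A_p$-integrability bookkeeping that the paper leaves implicit.
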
  

First note that by Lemma~6.2 in 
Bj\"orn--Bj\"orn--Shanmugalingam~\cite{BBSliouville},
a function $u$ is \A-harmonic in an open interval $I \subset \R$
if and only if it is given by 
\begin{equation} \label{eq-a-int}
u(x)=b+a \int_0^x  \wt^{1/(1-p)} \, dt, \quad x \in I, 
\end{equation}
where $a,b \in \R$.

\begin{proof}
As for the sufficiency, let $u$ be an \A-harmonic function on $\Om \setm E$.
Let $I$ be a component of $\Om$.
As $E$ does not separate $I$,
using \eqref{eq-a-int} we can \A-harmonically extend $u|_{I \setm E}$  to $I$,
and this extension is unique.
Doing this in each component, we end up with an \A-harmonic extension of $u$ to $\Om$.

Conversely, if there is a component $I$ of $\Om$
such that $I \setm E$ is not connected, we let $V$ be a component
of $I \setm E$.
Then $u=\chi_V$
is an \A-harmonic function on $\Om \setm E$, which due to \eqref{eq-a-int}
does not have any \A-harmonic extension to $\Om$.
\end{proof}

We next turn to the proof of Theorem~\ref{thm-R}
(in which case the extensions \emph{are}
required to be bounded).

\begin{proof}[Proof of Theorem~\ref{thm-R}]
Let $d\nu = v\,dx$, where $v=\wt^{1/(1-p)}$.
Since $\wt$ is an $A_p$-weight, it follows directly
from the $A_p$-condition \eqref{eq-Ap} that $v$ is an $A_{p'}$-weight,
where $p'=p/(p-1)$ is the dual exponent.
In particular,
\begin{equation} \label{eq-bdd}
  \nu(I)<\infty
  \quad \text{if and only if}
  \quad I \text{ is bounded},
\end{equation}
whenever $I$ is an interval.

Assume that $I \setm E$ is connected for every component $I$ of $\Om$.
We first want to show that \eqref{eq-I} is equivalent to \eqref{eq-nu}
below.
By \cite[Lemma~15.5]{HeKiMa}
we see that
\eqref{eq-I} implies that
\[
\frac{\nu(I \setm E)}{\nu(I)} \ge c \biggl(\frac{|I \setm E|}{|I|}\biggr)^{p'}
\ge \frac{c}{C^{p'}}
\] 
if $I$ is a bounded component, 
and hence that
\begin{equation} \label{eq-nu}
  \nu(I) \le C' \nu(I \setm E).
\end{equation}
For unbounded $I$, \eqref{eq-nu} follows directly from \eqref{eq-I} and \eqref{eq-bdd},
since $I \setm E$ is connected.
The converse implication \eqref{eq-nu} $\imp$ \eqref{eq-I}
follows from Theorem~3 in Coifman--Fefferman~\cite{coifmanF}
or \cite[Lemma~15.8]{HeKiMa}.
(The implication \eqref{eq-I} $\imp$ \eqref{eq-nu} can also be obtained using
Coifman--Fefferman~\cite[Theorem~3 and Lemma~5]{coifmanF}.)

Next, we turn to 
the sufficiency,
and let $0 \le u \le 1$ be an \A-harmonic function on $\Om \setm E$.
Using \eqref{eq-a-int} in each component separately,
we can extend $u$ to $\Om$ as an \A-harmonic function.
It follows from \eqref{eq-a-int} and \eqref{eq-nu} that $|u| \le C'$.

Conversely, if there is some component $I$ of $\Om$ such that 
$I \setm E$ is disconnected, then $E$ is not weakly removable,
by Proposition~\ref{prop-R-weak}, and hence not removable.

Next, consider the case when $I \setm E$ is connected
for every component $I$, but there is no constant $C'$ as
in \eqref{eq-nu}.
We need to consider two cases.

\medskip

\emph{Case}~1.
\emph{There is a component $I$ of $\Om$ such that
$\nu(I \setm E) < \infty = \nu(I)$.}

In this case,
there is a nonconstant \A-harmonic function $u$ in $\Om \setm E$
which is identically $0$ outside $I$.
It has a unique \A-harmonic extension $U$ to $\Om$, by
Proposition~\ref{prop-R-weak}.
By \eqref{eq-a-int}, $u$ is bounded in $\Om \setm E$ but $U$ is
unbounded in $\Om$.

\medskip

\emph{Case}~2.
\emph{There is a sequence of components $I_j$ of $\Om$ such that
$\nu(I_j) > j \nu(I_j \setm E)$.}

In this case there is an \A-harmonic function $u$ in $\Om \setm E$
such that 
\[
   \inf_{I_j \setm E} u =0 \text{ and } 
   \sup_{I_j \setm E} u =1
   \quad \text{for } j=1,2,\ldots,
\]
and $u\equiv 0$ on $(\Om \setm \bigcup_{j=1}^\infty I_j) \setm E$. 
It has a unique \A-harmonic extension $U$ to $\Om$, by
Proposition~\ref{prop-R-weak}.
By \eqref{eq-a-int} we get that
\[
   \sup_{I_j} U- \inf_{I_j} U
   \ge \frac{\nu(I_j)} {\nu(I_j \setm E)}
   \Bigl(\sup_{I_j \setm E} u - \inf_{I_j \setm E} u\Bigr)
   >j,
\]
and thus $U$ is unbounded.
Hence $E$ is not removable.
\end{proof}

We now turn to quasiharmonic functions.
In the metric space Examples~9.3 and~9.4 in Bj\"orn~\cite{ABremove} it 
was shown that sets of positive measure can be removable for
bounded $Q$-quasiharmonic functions.
Using a result from
Bj\"orn--Bj\"orn--Shanmugalingam~\cite{BBSliouville},
we can now give a similar example on weighted $\R$.

\begin{example}
Let $w$ be a \p-admissible weight on $\R$, or equivalently
an $A_p$-weight (see above).
Let $\Om=\R$ and let $E \subsetneq \R$ be an unbounded closed
interval.
Then $I=\Om \setm E$ is an unbounded open interval.
By \eqref{eq-bdd}, $\nu(I)=\infty$, where $d\nu=w^{1/(1-p)} \,dx$.
It thus follows from  Proposition~6.6 in~\cite{BBSliouville} that
every bounded quasiharmonic function on $I$ is constant,
and hence trivially extends to $\R$.
Thus, we have shown  that $E$ is removable for bounded quasiharmonic
functions in $\R \setm E$, with respect to  $(\R,w)$.
Moreover, the quasiharmonicity constant (which is always $1$) is preserved.
\end{example}

In this example, removability depended on 
the fact that
all the quasiharmonic functions under consideration were constant.
The same is true for the removable sets
with positive capacity provided by
Examples~9.3 and~9.4 in Bj\"orn~\cite{ABremove}
and Theorems~\ref{thm-main-intro-unweighted} and~\ref{thm-main-Rn}.

However, we can now give the following example,
showing that sets of positive capacity (and positive measure) can be removable
for bounded quasiharmonic functions even when nonconstant quasiharmonic
functions 
are under consideration.

\begin{example} \label{ex-martio}
Assume that
$\R$ is unweighted.
Let $\Om=(-1,1)$ and $E=(-1,0]$.
Let $u$ be a bounded $Q$-quasiharmonic function on $\Om \setm E=(0,1)$.
Then $u$ is monotone and $u(0):=\lim_{x \to 0\limplus} u(x)$ exists.
By Martio's reflection principle~\cite[Theorem~4.1]{martioReflect}
the ``odd'' reflection
\begin{equation} \label{eq-refl}
     u^*(x)=\begin{cases}
         u(x), & 0 \le x < 1, \\
         2u(0)-u(-x), & -1 < x \le0,
       \end{cases}
\end{equation}
is a bounded $Q'$-quasiharmonic function on $\Om$.
Martio obtained the reflection principle with $Q'=2^p Q$.
This was improved to $Q'=\max\{2,2^{p-1}\}Q$
by Uppman~\cite[Theorem~2.3]{uppman}, who also showed
that the factor $\max\{2,2^{p-1}\}$ is best possible
independent of $Q$.
For 
\begin{equation} \label{eq-uppman}
     Q < \max \biggl\{\frac{1}{2-2^{1/p}},\frac{1}{2-2^{1/p-1}}\biggr\}
\end{equation}
he obtained the better bound $Q'=Q(2-Q^{-1/p})^p$, see
\cite[Theorem~2.4]{uppman}.
\end{example}

Thus we have obtained a removability result 
with an increase in the quasiharmonicity constant.
As there could be other extensions than those given by reflections, it
is not clear if the increase is really needed.

By reflecting several times one can extend $u$ to a quasiharmonic
function in any bounded open interval. However 
for unbounded intervals the increase (repeated infinitely many times) 
destroys the bounds for the quasiharmonicity constant  of the extended function
$\ut$,
unless $u$ is constant.
Moreover, $\ut$ would be unbounded,
by Proposition~6.6 in
Bj\"orn--Bj\"orn--Shanmugalingam~\cite{BBSliouville}.

Nevertheless, with repeated use of the reflection principle 
we can show the following characterization of removable sets
for bounded quasiharmonic functions on unweighted $\R$, under the same condition
as in Theorem~\ref{thm-R}.

\begin{thm} \label{thm-R-qharm}
The set $E$ is removable for bounded quasiharmonic  functions
  in $\Om \setm E$, with respect to unweighted $\R$,
  if and only if
  there is a constant $C$ such that
  for every component $I$ of $\Om$ it is
  true that  
  \begin{equation} \label{eq-I-q}
    I \setm E \text{ is connected}
    \quad \text{and} \quad
  |I| \le C |I \setm E|.
\end{equation}

More precisely, if $u$ is a bounded $Q$-quasiharmonic function on $\Om \setm E$
and \eqref{eq-I-q} holds with $C=2^N$, then $u$ has a
bounded $Q'$-quasiharmonic extension
to $\Om$, where
$      Q'=\max\{2,2^{p-1}\}^NQ$.

And conversely, if \eqref{eq-I-q} fails, then there is bounded \p-harmonic
function on $\Om \setm E$ with no bounded quasiharmonic extension to $\Om$.
\end{thm}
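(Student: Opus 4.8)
The plan is to argue componentwise. Writing $G=\Om\setm E$ and letting $I$ be a component of $\Om$, the hypothesis in~\eqref{eq-I-q} that $I\setm E$ is connected means that $J:=I\setm E$ is a single open subinterval of $I$, so that $E\cap I=I\setm J$ consists of at most two intervals abutting the endpoints of $I$. Since distinct components of $\Om$ are separated, it suffices to extend a given bounded $Q$-quasiharmonic function $u$ from each such $J$ to the corresponding $I$ with uniform control of the quasiharmonicity constant, and then, for the converse, to show that a failure of~\eqref{eq-I-q} on even one component already destroys removability. Throughout I would use that a quasiharmonic function on an interval of $\R$ is monotone (as in Example~\ref{ex-martio} and Martio--Sbordone~\cite{MaSb}), so that its one-sided limits at the endpoints of $J$ exist.

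For the sufficiency I would extend $u$ from $J$ to $I$ by repeated use of the reflection principle in the sharp form of Uppman~\cite[Theorem~2.3]{uppman} recalled in Example~\ref{ex-martio}. Each odd reflection across an endpoint of the current interval is well defined (by monotonicity), doubles the length of that interval, keeps the function bounded and monotone, and multiplies the quasiharmonicity constant by $M:=\max\{2,2^{p-1}\}$. Thus after $k$ reflections one obtains a bounded $M^{k}Q$-quasiharmonic function on an interval of length $2^{k}|J|$ containing $J$, where the choice of directions of the successive reflections governs how $J$ is placed inside this interval. The bound $|I|\le 2^{N}|J|$ from~\eqref{eq-I-q} (with $C=2^N$) should then allow one to reach, in $N$ reflections, an interval that contains $I$; restricting to $I$ preserves both boundedness and the $M^{N}Q$-quasiminimizing property, giving the desired extension. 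Carrying this out on every component with the same $N$ yields a global bounded $Q'$-quasiharmonic extension with $Q'=\max\{2,2^{p-1}\}^{N}Q$.

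For the converse I would, exactly as in the proof of Theorem~\ref{thm-R}, exhibit on a single bad component a bounded \p-harmonic (hence $1$-quasiharmonic) function with no bounded quasiharmonic extension; recall that here \p-harmonic functions are affine and $d\nu=w^{1/(1-p)}\,dx$ is a constant multiple of Lebesgue measure. If $I\setm E$ is disconnected for some component $I$, I would work at a gap of $E$ with adjacent components $V_1$ (left) and $V_2$ (right), and choose $u$ so that any continuous, and in particular any quasiharmonic, extension $U$ to $I$ is forced into a contradiction: when the gap is a single point one already violates continuity; when the adjacent components are bounded one lets $u$ be affine and increasing on $V_1$ but affine and decreasing on $V_2$, so that $U$ attains an interior maximum without being constant, contradicting the strong maximum principle (Theorem~\ref{thm-max}); and when $I$ is unbounded one arranges $U$ to be a nonconstant bounded function on the unbounded interval $I$, contradicting the Liouville theorem (Theorem~\ref{thm-Liouville}, see also Proposition~6.6 in~\cite{BBSliouville}). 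If instead $I\setm E$ is connected for every component but no uniform $C$ exists, I split as in Theorem~\ref{thm-R}: on an unbounded component $I$ with $|I\setm E|<\infty$ a nonconstant affine $u$ supported in $I$ has only unbounded quasiharmonic extensions, again by the Liouville theorem; while on bounded components $I_j$ with $|I_j|>j\,|I_j\setm E|$ I let $u$ satisfy $\inf_{I_j\setm E}u=0$ and $\sup_{I_j\setm E}u=1$, and invoke the quantitative one-dimensional quasiminimizer increment estimates (the $A_p$-type doubling mechanism underlying Theorem~\ref{thm-R}, going back to Martio--Sbordone~\cite{MaSb}) to conclude that $\operatorname{osc}_{I_j}U\to\infty$, so that no extension $U$ can be bounded.

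The main obstacle is the bookkeeping of the quasiharmonicity constant in the sufficiency when $J$ lies strictly inside $I$ with gaps on both sides: a single odd reflection enlarges the interval only on one side, so covering a two-sided gap while spending only the $N$ reflections permitted by $|I|\le 2^{N}|J|$ requires a careful choice of reflection directions followed by restriction, and one must verify that the required placement of $J$ can always be realized in exactly $N$ steps (the delicate configurations being those where $|I|$ is close to the extremal value $2^{N}|J|$). A secondary difficulty, in the converse, is that one must rule out \emph{all} bounded quasiharmonic extensions, not merely the unique affine one furnished by Proposition~\ref{prop-R-weak} and~\eqref{eq-a-int}; this is precisely where the quantitative estimates for one-dimensional quasiminimizers, rather than the explicit formula, become essential.
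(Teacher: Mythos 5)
Your strategy is essentially the paper's: sufficiency by iterating the Martio/Uppman reflection principle componentwise, and necessity by exhibiting bounded \p-harmonic counterexamples in exactly the same three situations (some $I \setm E$ disconnected; an unbounded component with $|I\setm E|<\infty$; a sequence of bounded components with $|I_j|>j\,|I_j\setm E|$). Your handling of the first two situations is correct, though routed slightly differently than in the paper: for disconnectedness you use monotonicity and the strong maximum principle (Theorem~\ref{thm-max}) where the paper quotes Martio--Sbordone's lemma that a quasiharmonic function constant on a subinterval is constant, and for the unbounded component you use the Liouville-type theorem for unbounded intervals (Proposition~6.6 of \cite{BBSliouville}, which the paper itself invokes for this purpose elsewhere) where the paper uses its Lemma~\ref{lem-fQ}. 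Also, the reflection bookkeeping you flag is a genuine nuisance but only perturbs constants: when $I\setm E$ has gaps on both sides, the achievable left/right coverages after $k$ reflections are subset sums of $\{|J|,2|J|,\ldots,2^{k-1}|J|\}$, and in the extremal configuration with two gaps of length $(2^N-1)|J|/2$ no choice of $N$ reflection directions works, so one needs $N+1$ reflections; this changes $Q'$ to $\max\{2,2^{p-1}\}^{N+1}Q$ in that case but does not affect removability (the paper's own proof is equally informal here).

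The genuine gap is your last case, the sequence of bounded components $I_j$ with $|I_j|>j\,|I_j\setm E|$. There you must bound from below the oscillation over $I_j$ of \emph{every} $Q$-quasiharmonic extension $U$, and you do so by invoking ``quantitative one-dimensional quasiminimizer increment estimates (the $A_p$-type doubling mechanism underlying Theorem~\ref{thm-R}, going back to Martio--Sbordone~\cite{MaSb})''. No such estimate exists in those sources: the mechanism behind Theorem~\ref{thm-R} is the explicit representation \eqref{eq-a-int}, which is available only for \A-harmonic extensions and gives no control whatsoever over an arbitrary quasiharmonic extension, and the Martio--Sbordone results you cite are qualitative. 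What is needed is precisely the paper's Lemma~\ref{lem-fQ}: if $U$ is a $Q$-quasiharmonic extension of $u(t)=t$ from $(0,1)$ to $(0,x)$, then $\lim_{t\to x\limminus}U(t)\ge f_Q(x)$ with $f_Q(x)\to\infty$ as $x\to\infty$. The paper proves this by a two-sided energy comparison: from below, $\int_0^x (U')^p\,dt \ge 1+(a-1)^p/(x-1)^{p-1}$ where $a=\lim_{t\to x\limminus}U(t)$, because the piecewise affine function with the same values at $0$, $1$ and $x$ minimizes energy on $(0,1)$ and on $(1,x)$ separately; from above, testing the quasiminimizing property of $U$ on $(0,x)$ against $v(t)=at/x$ gives $\int_0^x (U')^p\,dt \le Q\,a^p/x^{p-1}$; combining these yields $a^pQ\ge x^{p-1}$, hence the growth. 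Without this lemma (or a substitute, e.g.\ a compactness/blow-up argument reducing to the Liouville theorem, which you do not sketch), your Case~2 does not close, and with it the converse fails: you would only have excluded the affine extension, not all bounded quasiharmonic ones. You correctly identify in your final paragraph that such an estimate is ``essential'', but naming the need is not the same as supplying the proof; this estimate is the one new analytic ingredient of the paper's argument.
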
  

When $Q$ satisfies \eqref{eq-uppman}
one can obtain a better (but more complicated) bound $Q'(Q,p,N)$ using
Theorem~2.4 in Uppman~\cite{uppman}, see
Example~\ref{ex-martio}.
In particular, $Q'(Q,p,N) \to 1$, as $Q \to 1$.
Moreover, $Q'(1,p,N)=1$ and we thus recover (the unweighted case of)
Theorem~\ref{thm-R}.

\begin{lem} \label{lem-fQ}
Let $Q \ge 1$ and $u(t)=t$, $0 <t< 1$.
Then there is an increasing function $f_Q:(1,\infty) \to \R$
such that $\lim_{x \to \infty} f_Q(x)=\infty$ and
such that if $U$ is a $Q$-quasiharmonic extension
of $u$ to $(0,x)$, where $x>1$,
then $U(x):=\lim_{t \to x\limminus} U(t) \ge f_Q(x)$.
\end{lem}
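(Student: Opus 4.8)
The plan is to squeeze $U$ between the energy it is forced to carry on the already-prescribed part $(0,1)$ and the energy that the $Q$-quasiminimizer property permits it to have, and then read off a growth bound for $U(x)$.

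First I would record the one-dimensional form of the quasiminimizer inequality. Since $U$ is a continuous $Q$-quasiminimizer on $(0,x)\subset\R$, a standard localization of Definition~\ref{def-quasimin} shows that for every subinterval $[a,b]\subset(0,x)$ the affine function $L$ with $L(a)=U(a)$, $L(b)=U(b)$ is an admissible (and energy-minimizing) competitor with those boundary values, whence
\[
  \int_a^b |U'|^p\,dt \le Q\,\frac{|U(b)-U(a)|^p}{(b-a)^{p-1}},
\]
where $Q\ge1$ is used to absorb the contribution on the part of $[a,b]$ on which the optimal perturbation vanishes. I would also use that one-dimensional quasiharmonic functions are monotone, exactly as already invoked in Example~\ref{ex-martio}; since $U\equiv u$ is increasing on $(0,1)$, the function $U$ is nondecreasing on all of $(0,x)$, so $|U(b)-U(a)|=U(b)-U(a)$ and the limit $U(x)=\lim_{t\to x\limminus}U(t)$ exists in $(-\infty,\infty]$.

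The heart of the argument is then a two-line estimate. Fix $0<a<1\le b<x$. On $(a,1)$ we have $U'=u'=1$ a.e., so $\int_a^b|U'|^p\,dt\ge\int_a^1 1\,dt=1-a$. Feeding this into the displayed inequality and using $U(a)=a$ gives
\[
  (U(b)-a)^p \ge \frac{(1-a)(b-a)^{p-1}}{Q}.
\]
Letting $b\to x\limminus$ (the right-hand side is continuous in $b$, and $U(b)\to U(x)$ by monotonicity) and then $a\to 0\limplus$ yields $U(x)^p\ge x^{p-1}/Q$, that is,
\[
  U(x)\ge Q^{-1/p}\,x^{(p-1)/p}=:f_Q(x).
\]
Since $p>1$, the function $f_Q$ is increasing on $(1,\infty)$ and $f_Q(x)\to\infty$ as $x\to\infty$, which is exactly the assertion (and the case $U(x)=\infty$ holds trivially).

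The only genuinely delicate points, which I would spell out, are the two ingredients above: the reduction of Definition~\ref{def-quasimin} to the affine-competitor comparison on each subinterval (routine, but it must handle the subset of $[a,b]$ on which the optimal $\phi$ vanishes, where $Q\ge1$ saves the day), and the monotonicity of $U$, without which $U(b)$ could a priori decrease and the lower bound would be lost. Neither requires boundedness of $U$, so the estimate applies to an arbitrary quasiharmonic extension.
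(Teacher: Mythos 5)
Your proof is correct and follows essentially the same route as the paper's: a lower bound on the energy of $U$ coming from the prescribed piece on $(0,1)$, played against the upper bound obtained by testing the quasiminimizer property on the affine competitor joining the endpoint values, with the same care about the set where the perturbation vanishes. The only real difference is a simplification: the paper defines $f_Q(x)$ as the infimum of $U(x)$ over all extensions and derives the implicit inequality $f_Q(x)^p Q \ge x^{p-1}+(f_Q(x)-1)^p(1-1/x)^{1-p}$ (keeping an extra term from a piecewise $p$-harmonic comparison function), whereas you discard that unneeded term and get the explicit, manifestly increasing bound $f_Q(x)=Q^{-1/p}x^{(p-1)/p}$.
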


\begin{proof}
Let $U$ be a $Q$-quasiharmonic extension of $u$ to $(0,x)$, where $x>1$.
Then $U$ must be increasing and so $a=U(x):=\lim_{t \to x\limminus} U(t)$ exists.
Let $f_Q(x)= \inf a$ over all such extensions $U$,
which also must be an increasing function.
Next, let
\[
v(t)=\frac{at}{x}, \ 0 \le t \le x,
\quad \text{and} \quad
V(t)=\begin{cases}
t, &  0 \le  t \le 1, \\
\displaystyle 1+\frac{(a-1)(t-1)}{x-1}, &  1 \le  t \le x.
  \end{cases}
\]
Then $v$ is \p-harmonic in $(0,x)$ and $V$ is \p-harmonic in $(1,x)$.
Since $V(0)=U(0):=\lim_{x \to 0\limplus} U(t)$, $V(1)=U(1)$, $V(x)=U(x)$,
and $V$ minimizes the \p-energy among functions with the same
boundary values 
on $(0,1)$ as well as on  $(1,x)$,
we see that
\[
   \int_0^x (U')^p\,dt \ge \int_0^x (V')^p\,dt = 1+ \frac{(a-1)^p}{(x-1)^{p-1}}.
\]
On the other hand, by testing the quasiminimizer condition
on $(0,x)$ 
we also get that
\[
   \int_0^x (U')^p\,dt \le Q \int_0^x (v')^p\,dt = Q\frac{a^p}{x^{p-1}}.
\]
Hence
\[
a^p Q \ge x^{p-1}+\frac{(a-1)^p}{(x-1)^{p-1}}x^{p-1}
=  x^{p-1}+(a-1)^p \biggl(1-\frac{1}{x}\biggr)^{1-p}
\]
and thus also 
\begin{equation} \label{eq-fQ}
f_Q(x)^p Q \ge x^{p-1}+(f_Q(x)-1)^p \biggl(1-\frac{1}{x}\biggr)^{1-p}.
\end{equation}
Since the right-hand side in \eqref{eq-fQ} 
tends to $\infty$,
as $x \to \infty$, so does the left-hand side,
i.e.\ $\lim_{x \to \infty} f_Q(x)=\infty$.
\end{proof}

\begin{proof}[Proof of Theorem~\ref{thm-R-qharm}]
Assume first that the condition in the statement is satisfied with
constant $C=2^N$.
Let $u$ be a bounded $Q$-quasiharmonic function in $\Om \setm E$.
We may assume that $0 \le u \le 1$.
In each component $I$ of $\Om$, we can extend $u$ from $I \setm E$
to $I$ by using Martio's reflection principle (see Example~\ref{ex-martio})
at most $N$ times, resulting
in a function $U$ which is
$Q'$-quasiharmonic function on each $I$, and hence on $\Om$,
where $Q'=\max\{2,2^{p-1}\}^NQ$.
It follows from \eqref{eq-refl} that $\sup_I U - \inf_I U \le 2^N$
and hence that $|U| \le 2^N+1$ in $\Om$ (as $I \setm E \ne \emptyset$).

We now turn to the converse.
Assume first that 
there is a component $I$ of $\Om$ such that
$I \setm E$ is not connected.
Let $A$ be a component
of $I \setm E$ and $u=\chi_A$, which is a \p-harmonic function in $\Om \setm E$.
Now it is well known
that a quasiharmonic function in an open interval $I'$ which is constant
in an open subinterval $I'' \subset I'$ must be constant throughout $I'$,
see e.g.\ Martio--Sbordone~\cite[Lemma~2]{MaSb}.
Thus $u$ has no quasiharmonic extension to $\Om$.

Next, we consider the case when $I \setm E$ is connected
for every component $I$, but there is no constant $C$ as
in \eqref{eq-I-q}.
As in the proof of Theorem~\ref{thm-R}, we need to consider two cases.

\medskip

\emph{Case}~1.
\emph{There is a component $I$ of $\Om$ such that
$|I \setm E| < \infty = |I|$.}

In this case,
there is a nonconstant \p-harmonic function $u$ in $\Om \setm E$
which is identically $0$ outside $I$.
Let $U$ be any quasiharmonic extension of $u$ to $\Om$.
Then, by Lemma~\ref{lem-fQ}, $U$ must unbounded
in $I$. 

\medskip

\emph{Case}~2.
\emph{There is a sequence of components $I_j$ of $\Om$ such that
$|I_j| > j |I_j \setm E|$.}

In this case there is a \p-harmonic function $u$ in $\Om \setm E$
such that 
\[
   \inf_{I_j \setm E} u =0 \text{ and } 
   \sup_{I_j \setm E} u =1
   \quad \text{for } j=1,2,\ldots,
\]
and $u\equiv 0$ on $(\Om \setm \bigcup_{j=1}^\infty I_j) \setm E$. 
Let
$U$ be any $Q$-quasiharmonic extension of $u$ to $\Om$.
Then by Lemma~\ref{lem-fQ},
\[
   \sup_{I_j} U- \inf_{I_j} U
   \ge f_Q\biggl(\frac{|I_j|}{|I_j \setm E|}\biggr)
   \ge f_Q(j).
\]
Hence $U$ must be unbounded,
and since $Q$ was arbitrary, $u$ does not have any bounded
quasiharmonic extension to $\Om$.
\end{proof}

\section{Removability for \texorpdfstring{\A}{A}-superharmonic functions;
  \texorpdfstring{\\}
Theorem~\ref{thm-superh}}
\label{sect-super}

\emph{In addition to the general assumptions from the beginning
  of Section~\ref{sect-prelim}, throughout this section we assume that $n \ge 2$.}

\medskip

In this section we want to prove Theorem~\ref{thm-superh}.
For \A-superharmonic functions we could have used
the fact that they are \p-finely continuous
(see Section~12 and especially Theorem~12.8 in \cite{HeKiMa})
and at the same
time $\essliminf$-regularized.
The latter is part of the Definition~\ref{def-qsh}
also for quasisuperharmonic functions, but it is not known
if quasisuperharmonic functions are \p-finely continuous.
Instead, the following weaker type
of continuity, related to measure density, will be sufficient
for our purpose.

\begin{lem} \label{lem-density}
  Assume that $u$ is quasisuperharmonic in $\Om$, that $x \in \Om$
  and 
  that
  \[
  L=\limsup_{r \to 0} \frac{\mu(\{y \in B(x,r) : u(y) \ge \alp\})}
                    {\mu(B(x,r))} >0,
  \]
  then $u(x) \ge \alp$.
\end{lem}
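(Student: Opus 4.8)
The plan is to argue by contradiction: suppose $u(x) < \alpha$, and show that the density $L$ must then be zero, contradicting the hypothesis. Since $u$ is quasisuperharmonic it is essliminf-regularized, so
\[
u(x) = \essliminf_{y \to x} u(y),
\]
which means that for any threshold $t$ with $u(x) < t < \alpha$ we have
\[
\essliminf_{y \to x} u(y) > \text{(something)}
\]
giving us control on the measure of the bad set. More precisely, the definition of essliminf says that for $t$ strictly between $u(x)$ and $\alpha$, the essential liminf exceeding $u(x)$ but staying below $t$ tells us that for small $r$ the set where $u < t$ fills up almost all of $B(x,r)$. I would make this quantitative: fix $t$ with $u(x) < t < \alpha$, and use essliminf-regularization to conclude that
\[
\lim_{r \to 0} \frac{\mu(\{y \in B(x,r) : u(y) < t\})}{\mu(B(x,r))} = 1,
\]
or at least that the complementary density tends to $0$.

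The key step is translating the pointwise condition $u(x) = \essliminf_{y\to x} u(y)$ into a density statement. By definition of essential liminf, $\essliminf_{y\to x}u(y) = \sup_{r>0} \operatorname*{ess\,inf}_{B(x,r)\setminus\{x\}} u$, and the latter essential infimum is the largest $s$ such that $\mu(\{y \in B(x,r): u(y) < s\}) = 0$. Since $u(x) < t$, for every small $r$ the essential infimum over $B(x,r)$ is at most $u(x) < t$; but this only says the set $\{u < t\}$ has positive measure, not full density. To get full density I would instead use that $u(x)$ equals the essliminf, so for any $s > u(x)$ the set where $u \le s$ has \emph{full} density at $x$ — this is the content of essliminf-regularization combined with the measure-theoretic Lebesgue-type density that the regularization forces. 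Choosing $s = t < \alpha$, this yields
\[
\limsup_{r \to 0} \frac{\mu(\{y \in B(x,r): u(y) \ge \alpha\})}{\mu(B(x,r))}
\le \limsup_{r \to 0} \frac{\mu(\{y \in B(x,r): u(y) \ge t\})}{\mu(B(x,r))} = 0,
\]
contradicting $L > 0$.

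I expect the main obstacle to be precisely this translation: establishing that essliminf-regularization implies full measure density of the sublevel set $\{u \le t\}$ at $x$, rather than merely positive measure. The definition as stated in Definition~\ref{def-qsh} gives $u(x) = \essliminf_{y \to x} u(y)$ pointwise, and one must unwind what essential liminf means in terms of densities. If the definition only supplies positive lower density of sublevel sets (which is what the essinf characterization directly gives), then I would need to upgrade it; the doubling property of $\mu$ (which holds since $w$ is \p-admissible) together with a measure-theoretic argument should suffice, but care is needed to avoid circularity. Once the density reformulation is in hand, the contradiction is immediate and the proof closes.
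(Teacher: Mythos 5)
There is a genuine gap, and it lies exactly where you suspected it might. Your entire argument rests on the claim that essliminf-regularization at $x$ forces the sublevel set $\{u\le t\}$ to have \emph{full} measure density at $x$ for every $t>u(x)$. This is false for a general essliminf-regularized function: take $u$ equal to $0$ on a measurable set having density $\tfrac12$ at $x$ and equal to $1$ on its complement. Then $\essliminf_{y\to x}u(y)=0$, so setting $u(x)=0$ makes $u$ essliminf-regularized at $x$, and yet $\{y:u(y)\ge 1\}$ has upper density $\tfrac12>0$ at $x$. No doubling or measure-theoretic upgrade can rescue the implication, because the implication simply does not hold at the level of generality you are working at; the regularization only yields that $\{u<t\}$ has positive measure in every small ball, which you correctly identified as insufficient. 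What is missing is any substantive use of the hypothesis that $u$ is \emph{quasisuperharmonic} -- your proof never invokes it beyond the regularization clause of Definition~\ref{def-qsh}, and the lemma is genuinely a statement about quasisuperharmonic functions, not about essliminf-regularized ones.

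The paper's proof supplies the missing ingredient via the weak Harnack inequality for quasisuperharmonic functions (Kinnunen--Martio). Normalizing $\alp=0$ and assuming $u(x)<0$, one uses lower semicontinuity to get $u>u(x)-\eps$ on a small ball, so that $v=u-u(x)+\eps$ is positive there. On the set $\{u\ge 0\}\cap B(x,r)$ one has $v\ge|u(x)|$, whence
\[
|u(x)|\Bigl(\tfrac{L}{2}\Bigr)^{1/\sigma}
\le\biggl(\frac{1}{\mu(B(x,r))}\int_{B(x,r)}v^\sigma\,d\mu\biggr)^{1/\sigma}
\le c\inf_{B(x,3r)}v\le c\,v(x)=c\,\eps
\]
for suitable $r$, and letting $\eps\to0$ forces $|u(x)|L^{1/\sigma}=0$, a contradiction. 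It is precisely the weak Harnack estimate that converts ``positive density of the set where $u$ is large'' into a pointwise lower bound at $x$; this is the quantitative substitute for the false density claim in your outline. To repair your proof you would need to import this (or an equivalent fine-continuity/Harnack-type property), at which point the argument becomes essentially the paper's.
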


\begin{proof}
We may assume that $\alp=0$.
Assume on the contrary that $u(x) <0$.
Let $\eps >0$.
As $u$ is lower semicontinuous there is $r_0>0$
such that $B(x,r_0) \subset \Om$ and $u(y)>u(x)-\eps $ for $y \in B(x,r_0)$.
Let $v=u-u(x)+\eps >0$ in $B(x,r_0)$.
Then there is $0 < r < r_0/5$ such that
\[
    \frac{\mu(\{y \in B(x,r) : u(y) \ge 0\})}
     {\mu(B(x,r))} > \frac{L}{2}.
\]
By the weak Harnack inequality for quasisuperharmonic functions in
Kinnunen--Martio~\cite[second display on p.~479]{KiMa03},
there are $\sigma>0$ and $c>0$ such
that 
\[
   |u(x)| \biggl(\frac{ L}{2}\biggr)^{1/\sigma}
   \le \biggl(\frac{1}{\mu(B(x,r))}\int_{B(x,r)} v^\sigma d\mu \biggr)^{1/\sigma}
   \le c \inf_{B(x,3r)} v
   \le c v(x) 
   =  c \eps.  
\]
Letting $\eps \to 0$ gives a contradiction, and
thus the result is proved.
\end{proof}

\begin{proof}[Proof of Theorem~\ref{thm-superh}]
If $\Cpmu(E)=0$,
then it follows from
\cite[Theorem~7.35]{HeKiMa}
and  Bj\"orn~\cite[Theorem~6.3]{ABremove}
that $E$ is removable as in \ref{s-pharm}--\ref{s-qharm-2},
with preserved quasisuperharmonicity constant in
\ref{s-qharm} and~\ref{s-qharm-2}.

Assume conversely that $\Cpmu(E)>0$.
We shall construct a bounded \A-super\-harmonic function $u$ on $\Om \setm E$
which has no quasisuperharmonic extension to $\Om$ (neither bounded nor unbounded),
from which the nonremovability in all four senses \ref{s-pharm}--\ref{s-qharm-2}
follows.
We begin with some reductions.

First of all, we can find a component $\Omt$ of $\Om$ such that $\Cpmu(E \cap \Omt)>0$.
If we then can construct a bounded \A-superharmonic function $\ut$ in $\Omt \setm E$
which has no quasisuperharmonic extension to $\Omt$, then it follows
that
\[
  u=\begin{cases}
    \ut, & \text{in } \Omt \setm E, \\
    0, & \text{in } (\Om \setm \Omt) \setm E,
    \end{cases}
\]
is a bounded \A-superharmonic function in $\Om \setm E$
without quasisuperharmonic extension to $\Om$.
We may thus assume, without loss of generality, that $\Om$ is connected.

Let
\[
   E_0=\{x \in E : \Cpmu(E \cap B)=0 \text{ for some ball } B \ni x\}.
\]
Then $E_0$ is a relatively open subset of $E$ which
can be written as a countable union of sets of capacity zero,
and thus $\Cpmu(E_0)=0$.
By the first part of the proof, $E_0$ is thus removable in all four senses,
and since quasisuperharmonic functions are $\essliminf$-regularized,
the extensions are unique and are also bounded resp.\ bounded from below.
Thus we may first remove $E_0$ and proceed by showing that $E \setm E_0$
is not removable.
We may thus assume, without loss of generality, that $E_0 = \emptyset$.

Next, let $z \in E\cap \bdy E$.
Then $\dist(z,\bdy \Om)>0$ and we can find $\xi \in \Om$ such
that $\dist(\xi,z) < \tfrac14 \min\{\dist(z,\bdy \Om),1\}$.
(If $\Om = \R^n$, we consider $\dist(z,\bdy \Om)$ to be $\infty$.)
Finally, we find $\z \in E$ so that $|\z-\xi|=\dist(\xi,E)<\dist(\xi,\bdy \Om)$.
We may assume that $\z=0$ and $\xi=(1,0,\ldots,0)$.

It follows that the open interval $(0,\xi) \subset \Om \setm E$
and that $B(0,1) \subset \Om$.
We now consider two cases.
As in  Section~\ref{sect-Rn},
we will use \A-harmonic Perron solutions
and boundary
regularity.

\medskip

\emph{Case}~1. \emph{There is $0<r<1$ such that $\{x \in E : |x|=r\}=\emptyset$.}

Let $K=E \cap B(0,r)$, which is a compact set with positive capacity
(as $E_0=\emptyset$), and $G=B(0,r) \setm K$.
Then $\chi_{\bdy B(0,r)} \in C(\bdy G)$ and we can define
\[
   u=\begin{cases}
    \Hpind{G} \chi_{\bdy B(0,r)}, & \text{in } G, \\
    1, & \text{in } \Om \setm (E \cup G),
   \end{cases}
\]
where $\Hpind{G} \chi_{\bdy B(0,r)}$ is an \A-harmonic Perron solution.
Then $u$ is \A-harmonic in $G$.
Moreover, $u$ is continuous in $\Om \setm E$ because all
the points in $\bdy B(0,r)$ are regular for $G$.
It thus follows from the pasting lemma~\cite[Lemma~7.9]{HeKiMa}
that $u$ is \A-superharmonic in $\Om \setm E$.

As $\Cpmu(K)>0$, the Kellogg property shows that $u$ is not constant.
If $u$ had a quasisuperharmonic extension $\ut$ to $\Om$
it would attain its minimum in $\itoverline{B(0,r)} \subset \Om$,
and that minimum must be $<1$ and lie in $B(0,r)$.
But this contradicts the strong minimum principle for
quasisuperharmonic functions
(which 
for quasisuperharmonic functions follows 
from the weak Harnack inequality in
Kinnunen--Martio~\cite[second display on p.~479]{KiMa03}).
This case is thus settled.

\medskip

\emph{Case}~2. \emph{$\{x \in E : |x|=r\}\ne \emptyset$ for $0<r<1$.}

For each $x \in B(0,r) \setm \{0\}$, let $\theta_x=\arccos (\xi \cdot x/|x|)$
be the angle which $x$ makes with $\xi$.
For each $j=1,2,\ldots$\,, there is
\[
  x_j \in K_j:=\{x \in E : 4^{-j} \le |x| \le 2 \cdot 4^{-j}\}
  \quad \text{such that} \quad
  \theta_{x_j}=\min\{\theta_x : x \in K_j\}.
\]
Since $0$ is a closest point to $\xi$ in $E$, we must have
$\theta_{x_j} > \pi/4$.
Let $B_j=B(x_j,4^{-j-2})$.
Then there is $y_j \in B_j$ such that
$B_j':=B(y_j,4^{-j-4}) \subset B_j \cap (\Om \setm E)$.

Let $A=\bigcup_{j=1}^\infty \itoverline{B}_j$, 
$G=B(0,1) \setm (E \cup A)$,
\[
   f(x)=(1-4^2(|x|-1))_\limplus +\sum_{j=1}^\infty (1-4^{j+2}\dist(x,B_j))_\limplus
\]
(which is continuous except at $x=0$)
and
\[
   u=\begin{cases}
    \uHpind{G} f, & \text{in } G, \\
    1, & \text{in } \Om \setm (E \cup G),
   \end{cases}
\]
where $\uHpind{G} f$ is an \A-harmonic upper Perron solution.
All the points in $\bdy G \cap (\bdy B(0,1) \cup \bigcup_{j=1}^\infty \bdy B_j)$
are regular with respect to $G$,
and hence, by \cite[Lemma~9.6]{HeKiMa}, $u$ is continuous in $\Om \setm E$.
It thus follows from the pasting lemma~\cite[Lemma~7.9]{HeKiMa}
that $u$ is \A-superharmonic in $\Om \setm E$.

Assume that
$u$ has a quasisuperharmonic extension $U$ to $\Om$.
Since $U=u\equiv 1$ in $\bigcup_{j=1}^\infty B'_j$,
it follows from Lemma~\ref{lem-density} and
Bj\"orn--Bj\"orn~\cite[Lemma~3.3]{BBbook} that
$U(0) \ge 1$.
On the other hand,
because $\{x \in E : |x|=3\cdot 4^{-j}\}\ne \emptyset$ and $E_0 = \emptyset$,
it follows from the Kellogg property that there is a
regular point $y_j \in \bdy G$ with
$\frac 52 \cdot 4^{-j} < |y_j| <   \frac 72 \cdot 4^{-j}$, $j=1,2,\ldots$\,.
Thus $f_j(y_j)=0$ (again using \cite[Lemma~9.6]{HeKiMa})
and $\lim_{G \ni x \to y_j} u(x)=0$.
Since $\lim_{j \to \infty} y_j = 0$ and $U$ is lower semicontinuous,
it follows that $U(0) \le 0$, contradicting the above.
Thus also this case is settled.
\end{proof}

\end{document}